\newtheorem{theorem}{Theorem}[section]
\newtheorem{lemma}[theorem]{Lemma}
\theoremstyle{definition}
\newtheorem{definition}[theorem]{Definition}
\newtheorem{example}[theorem]{Example}
\theoremstyle{remark}
\newtheorem{remark}[theorem]{Remark}
\numberwithin{equation}{section}
\newcommand{\spt}{{\rm supp}}
\begin{document}
	
	\setcounter{page}{1}
	\title[Hypercyclic Sequences of weighted translations on hypergroups]{Hypercyclic Sequences of weighted translations on hypergroups}
	\author[V. Kumar and S.M. Tabatabaie]{Vishvesh Kumar$^{1,*}$ \MakeLowercase{and} Seyyed Mohammad Tabatabaie$^{2}$}
	
	\address{$^1$Department of Mathematics: Analysis, Logic and Discrete Mathematics
		\endgraf
		Ghent University, Ghent 9000, Belgium}
	\email{\textcolor[rgb]{0.00,0.00,0.84}{vishveshmishra@gmail.com}}
	
	\address{$^{2}$Department of Mathematics, University of Qom, Qom, Iran.}
	\email{\textcolor[rgb]{0.00,0.00,0.84}{sm.tabatabaie@qom.ac.ir}}

	%\dedicatory{This paper is dedicated to Professor ABCD}

	\subjclass[2020]{Primary 43A62; Secondary 47A16, 43A15.}
	
	\keywords{locally compact hypergroup, hypercyclic operator, hereditary hypercyclic operator, topologically transitivite operator, aperiodic sequence, Orlicz space.\\
		$^*$Corresponding Author}

	\begin{abstract}
		In this paper we characterize hypercyclic sequences of weighted translation operators on an Orlicz space in the context of locally compact hypergroups.    
	\end{abstract} \maketitle
	
	\section{Introduction and preliminaries}
	Linear dynamical properties of bounded operators have been investigated intensely during the last decades; see \cite{gro} as a monograph. Specially, H.N. Salas in \cite{Salas} characterized hypercyclicity of bilateral weighted shifts on $\ell^2(\mathbb{Z})$ by some conditions on the weight function.  
	Then, study of linear dynamic of weighted  translations on a Lebesgue space $L^p(G)$  was begun by \cite{chen11,cc09,che1}, where $G$ is a locally compact group and $1\leq p<\infty$.  Afterwards, the theme on $L^p(G)$ appeared in many articles.  For instance,  the existence of hypercyclic weighted translations on $L^p(G)$ was studied in \cite{kuchen17}. See also \cite{che2, chta3} for the hypergroup case and vector-valued version. In \cite{aa17, cd18} some linear dynamical properties of weighted translation operators on  Orlicz spaces in the context of locally compact groups  have been studied. 
	Orlicz spaces are a generalization of the usual Lebesgue spaces which have been thoroughly investigated over the last decades. Recently, Chen et al. in \cite{che4} 
	gave a characterization of  topologically transitive translation operators on a  weighted Orlicz space $L_w^\Phi(G)$, where $\Phi$ is a Young function, $w$ is a weight and $G$ is a second countable locally compact group. Recently, Orlicz spaces on locally compact hypergroups have been investigated by V. Kumar et al. in \cite{Kumar,KR,Kumar1}. In this paper, we extend many results in recent papers for the context of hypergroups. For this, by fixing a sequence in a hypergroup $K$ and a weight function, we introduce a sequence of bounded linear operators on the Orlicz space $L^\Phi(K)$ and state some necessary conditions for this sequence to be densely hypercyclic. Among other things, the concept of an aperiodic sequence in a hypergroup (see Definition \ref{ape}) plays a key role in the proofs. The main idea for initiating this concept is an equivalent condition given in \cite{che1} for the group case. In sequel, we improve our results for the special case that the sequence of weighted translation operators corresponds to a sequence in the center of hypergroup. Finally, we give an equivalent condition for a single weighted translation operator $T_{z,w}$ to be hereditary hypercyclic on the Orlicz space $L^\Phi(K)$. For convenience of readers, here we write some preliminaries.
	%===============================================
	%-----------------------------------------------
	%-----------------------------------------------
	\subsection{Locally Compact Hypergroups} In this subsection we recall definition and some basic properties of hypergroups. For more details we refer to monographs \cite{jew} and \cite{blm}.
	Let $X$ be a locally compact Hausdorff space. We denote by $\mathcal{M}(X)$ the space of all Radon complex measures on $X$, and by $C_c(X)$ the set of all continuous compactly supported complex-valued functions on $X$.  The point mass measure at $x\in X$ and the support of any measure $\mu\in \mathcal{M}(X)$ are denoted by $\delta_x$ and ${\rm supp}(\mu)$ respectively.  For each $A\subseteq X$, $\chi_A$ denotes the characteristic function of $A$.
	%---------------------------------------------------------------------------------
	\begin{definition}
		Suppose that $K$ is a non-empty locally compact Hausdorff space, $(\mu,\nu)\mapsto \mu\ast\nu$ is a bilinear positive-continuous mapping from $\mathcal{M}(K)\times \mathcal{M}(K)$ into $\mathcal{M}(K)$ (called {\it convolution}), and $x\mapsto x^-$ is an involutive homeomorphism on $K$ (called {\it involution}) with the following properties:
		\begin{enumerate}
			\item[(i) ]$(\mathcal{M}(K),+,\ast)$ is an (associative) algebra;
			\item[(ii) ]for each $x,y\in K$, $\delta_x\ast\delta_y$ is a compact supported probability measure;
			\item[(iii) ]the mapping $(x,y)\mapsto\text{supp}(\delta_x\ast\delta_y)$ from $K\times K$ into $\textbf{C}(K)$ is continuous, where  $\textbf{C}(K)$ is the set of all non-empty compact subsets of $K$ equipped with Michael topology;
			\item[(iv) ]there exists a (necessarily unique) element $e\in K$ (called \emph{identity}) such that $\delta_x\ast \delta_e=\delta_e\ast\delta_x=\delta_x$ for all $x\in K$;
			\item[(v) ]for each $x,y\in K$, $e\in \text{supp}(\delta_x\ast\delta_y)$ if and only if $x=y^-$;
			\item[(vi) ]for each $x,y\in K$, $(\delta_x\ast\delta_y)^-=\delta_{y^-}\ast\delta_{x^-}$.
		\end{enumerate}
		Then, $K\equiv(K,\ast,^-,e)$ is called a \emph{locally compact hypergroup} (or simply a \emph{hypergroup}).
	\end{definition}
	%============================================================
	A non-zero non-negative Radon measure $m$ on a hypergroup $K$ is called a \emph{right Haar measure} if for each $x\in K$, $m\ast\delta_x=m$. Throughout $K$ is a locally compact  hypergroup and $m$ is a  right Haar measure on $K$. For each Borel measurable function $f,g:K\rightarrow\mathbb{C}$  and $x,y\in K$ we define the right translation of function $f$ at $x \in K$ by an element $y \in K$ by
	$$f_x(y)=f^y(x)=f(x*y):=\int_K f\,d(\delta_x * \delta_y),$$
	whenever  this integral exists. A locally compact group $G$ equipped with
	$$\mu\ast\nu\mapsto\int_K\int_K\delta_{xy}\,d\mu(x)d\nu(y)\qquad (\mu,\nu\in \mathcal{M}(K))$$
	as convolution, and $x\mapsto x^{-1}$ from $G$ onto $G$ as involution is a hypergroup. In this case trivially we have $f(x\ast y)=f(xy)$. Although any locally compact group is a hypergroup, in general there is no action between elements of a hypergroup; See \cite{blm} for several classes of hypergroups. If $\mu\in \mathcal M(K)$ and $f$ is a Borel measurable function on $K$, the convolutions $f*\mu$ is defined by:
	$$(f*\mu)(x) = \int_K f(x*y^-)\ d\mu(y),\quad(x\in K).$$
	In particular, $(f*\delta_{y^-})(x)=f^y(x).$ 
	
	For any $A,B\subseteq K$ we define
	$$A\ast B:=\bigcup_{x\in A,\,\, y\in B}\text{supp}(\delta_x\ast\delta_y).$$
	
	For each $x\in K$ we denote $\{x\}\ast A$ and $A\ast\{x\}$ simply by $x*A$ and $A*x$. Also, for each $n\in\mathbb N$ we put
	$$\{x\}^n:=\{x\}*\cdots*\{x\} \quad(\text{$n$ times}).$$
	\begin{definition}
		Let $K$ be a hypergroup. The {\it center} of $K$ is defined by
		$$Z(K):=\{x\in K: \delta_x\ast\delta_{x^-}=\delta_{x^-}\ast\delta_{x}=\delta_{e}\}.$$
	\end{definition} 
	Center of a hypergrup $K$, as the maximal subgroup of $K$, was introduced and studied in \cite{dun} and \cite{jew} (see also \cite{ross}). For each $x\in Z(K)$ and $y\in K$, $\text{supp}(\delta_x\ast\delta_y)$ and $\text{supp}(\delta_y\ast\delta_x)$ are singleton, and we denote the single element of them 
	by $xy$ and $yx$, respectively. For each $z \in Z(K)$ and $n \in \mathbb{N}$, setting 
	$$\delta_z^n:=\smash[b]{\underbrace{\delta_z*\delta_z*\cdot \cdots*\delta_z}_{n \text{ times}}},$$ 
	we have $\delta_z^n=\delta_{z^n}.$ 
	%==================================================
	\subsection{Basics of Orlicz spaces} Here, we present some definitions and facts related to Orlicz spaces in the context of hypergroups. We refer to  monographs  \cite{ Rao, rao} and articles \cite{Kumar, Kumar1, KR} for more details. 
	
	A non-zero convex function $\Phi : [0, \infty) \rightarrow [0, \infty]$ is called a {\it Young function} if  $\Phi(0)=0$ and $\lim_{x \rightarrow \infty} \Phi(x)=\infty.$ The {\it complimentary function} $\Psi$ of a given Young function $\Phi$  is defined by  
	$$ \Psi(y):= \mbox{sup}\{xy- \Phi(x): x \geq 0\}, \quad   (y \geq 0),$$ 
	which is also a Young function. In this case,  $(\Phi, \Psi)$ is called a {\it complementary pair}. In this paper, we assume that $(\Phi, \Psi)$ is a complementary pair. 
	For a  locally compact  hypergroup $K$ with a right Haar measure $m$, let $L^{\Phi}(K)$
	denotes the set of all Borel measurable
	functions $f: K \rightarrow \mathbb{C}$  such that
	$$\int_K \Phi(\alpha|f(x)|) \,dm(x)<\infty,$$ 
	for some $\alpha>0$.
	For each  function $f\in L^\Phi(K)$ we put
	$$\|f\|_\Phi:=\sup_{v\in\Omega_\Psi}\int_K |fv|\,dm,$$
	where $\Omega_\Psi$ denotes the set of all complex-valued Borel measurable functions $v$ on $K$ satisfying $\int_K \Psi(|v(x)|)\,dm(x)\leq 1$. 
	Then, since $m$ is a Haar measure, by \cite[Chapter III, Proposition 11]{rao}, $(L^\Phi(K),\|\cdot\|_\Phi)$ is a Banach space called an \emph{Orlicz space}.
	Another equivalent norm on $L^{\Phi}(G)$ is defined by
	$$N_\Phi(f):=\inf \\ \left\{ k>0: \int_K \Phi\left(\frac{|f|}{k} \right)dm \leq 1 \right\},$$
	for all $f\in L^\Phi(K)$, and called the \emph{Luxemburg norm}. In fact, we have 
	$$N_\Phi(f)\leq \|f\|_\Phi\leq 2N_\Phi(f)$$
	for all $f\in L^\Phi(K)$. 
	%So, a Borel measurable function $f:K\rightarrow \mathbb C$ belongs to $L^\Phi(K)$ if and only if $\|f\|_\Phi<\infty$.
	A Young function $\Phi$ is said to be
	\emph{$\Delta_2$-regular} and we write $\Phi\in \Delta_2$, if there are constants $k>0$ and $t_0\geq 0$ such that
	$\Phi(2t)\leq k\Phi(t)$ for each $t\geq t_0$. If $\Phi$ is
	$\Delta_2$-regular, then the space $C_c(K)$ is dense in $L^{\Phi}(K)$.
	For each $1 \leq p < \infty$, the function $\Phi_p$ defined by  $\Phi_p(x):=\frac{x^p}{p}$ is a Young function and the Orlicz space $L^{\Phi_p}(K)$ is same as the Lebesgue space $L^p(K)$. The complementary Young function of $\Phi_p$ is $\Phi_q$,  where $q=\frac{p}{p-1}$.  Other examples of Young functions includes $e^x-x-1,\, \cosh x-1$ and  $x^p \ln(x)$; see \cite{Rao} for more examples.
	
	If $(X,\mu)$ is a  probability measure space and $f$ is a real-valued measurable function on $X$ such that $\int_X f\,d\mu$ and $\int_X \Phi(f)\,d\mu$ exist, then by the Jensen's inequality \cite[Proposition 5, p. 62]{Rao} we have
	\begin{equation}\label{jen}
	\Phi\left(\int_X f(x)\,d\mu(x)\right)\leq \int_X \Phi(f(x))\,d\mu(x).
	\end{equation}
	
	For any $f \in L^{\Phi}(K),$ it is shown in \cite[Corollary 3.2]{Kumar} that $N_{\Phi}(f^z)\leq N_{\Phi}(f)$ for $z \in K.$ But if we take $z \in Z(K)$ we have following strong result which says norm $N_\Phi(\cdot)$ is invariant under translation by an element of the center of a hypergroup.
	
	\begin{lemma} \label{1.1} For $z \in Z(K)$ and $f \in L^\Phi(K),$ we have $N_\Phi(f^z)=N_\Phi(f).$
	\end{lemma}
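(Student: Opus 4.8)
The plan is to reduce the identity to the right-invariance of the Haar measure $m$, using crucially that for $z\in Z(K)$ the translation $f\mapsto f^z$ is implemented by a genuine point map rather than by an averaging process.

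First I would record the pointwise behaviour of $f^z$ when $z\in Z(K)$. Since $z$ lies in the center, for every $s\in K$ the measure $\delta_s\ast\delta_z$ is the point mass $\delta_{sz}$, where $sz$ denotes the single point of $\mathrm{supp}(\delta_s\ast\delta_z)$. Consequently, for \emph{any} Borel function $h$ on $K$ one has $h^z(s)=\int_K h\,d(\delta_s\ast\delta_z)=h(sz)$; in particular, applying this to $h=\Phi(|f|/k)$ gives
\begin{equation*}
\Phi\!\left(\frac{|f^z(s)|}{k}\right)=\Phi\!\left(\frac{|f(sz)|}{k}\right)=\left(\Phi\!\left(\frac{|f|}{k}\right)\right)^{\!z}\!(s)
\end{equation*}
for every $k>0$ and every $s\in K$.

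Next I would invoke the defining property $m\ast\delta_z=m$ of the right Haar measure. Unravelling the convolution of a measure with a point mass, this says precisely that $\int_K g^z\,dm=\int_K g\,dm$ for every nonnegative Borel function $g$ (first on $C_c(K)$, then by the usual monotone-convergence extension). Taking $g=\Phi(|f|/k)$ and combining with the previous display yields
\begin{equation*}
\int_K \Phi\!\left(\frac{|f^z|}{k}\right)dm=\int_K \Phi\!\left(\frac{|f|}{k}\right)dm
\end{equation*}
for every $k>0$. Since the two modular integrals agree for each $k$, the sets $\{k>0:\int_K\Phi(|f^z|/k)\,dm\le 1\}$ and $\{k>0:\int_K\Phi(|f|/k)\,dm\le 1\}$ coincide, and taking infima gives $N_\Phi(f^z)=N_\Phi(f)$.

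The only delicate point is the pointwise identity in the first step, and this is exactly where the centrality of $z$ is indispensable: for a general $z\in K$ the value $f^z(s)$ is an integral average of $f$ against the probability measure $\delta_s\ast\delta_z$, so convexity of $\Phi$ together with Jensen's inequality \eqref{jen} yields only $\Phi(|f^z(s)|/k)\le(\Phi(|f|/k))^z(s)$, and hence merely the inequality $N_\Phi(f^z)\le N_\Phi(f)$ recorded before the lemma. When $z\in Z(K)$ the probability measure degenerates to a point mass, the averaging disappears, and Jensen's inequality becomes an equality, upgrading the estimate to the asserted identity.
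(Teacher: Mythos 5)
Your proof is correct and takes essentially the same route as the paper's: both exploit that centrality of $z$ turns $f\mapsto f^z$ into a genuine point map, so that $\Phi(|f^z|/k)$ is itself a $z$-translate of $\Phi(|f|/k)$, and then invoke right-invariance of the Haar measure to equate the two modular integrals for every $k>0$, hence the Luxemburg norms. The only cosmetic difference is the direction of translation: the paper translates $g=\Phi(|f^z|/k)$ back by $z^-$ and cancels via $\delta_{z^-}\ast\delta_z=\delta_e$, whereas you identify $\Phi(|f^z|/k)=(\Phi(|f|/k))^z$ pointwise and apply invariance directly.
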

	\begin{proof}
		Let $f \in L^\Phi(K)$ and $z \in Z(K).$ Then 
		\begin{eqnarray} 
		N_\Phi(f^z)= \inf\left\lbrace k>0: \int_K \Phi \left( \frac{|f^z(x)|}{k} \right)\, dm(x) \leq 1 \right\rbrace
		\end{eqnarray}
		
		By setting $g(x):= \Phi\left(\frac{|f^z(x)|}{k} \right) \geq 0$ for all $x\in K$, and using the property \cite[3.3F]{jew} for right Haar measure $m,$ we get  
		\begin{eqnarray*}
			\int_K \Phi \left( \frac{|f^z(x)|}{k} \right)\, dm(x) = \int_K g(x)\, dm(x) = \int_K g^{z^-}(x)\, dm(x)= \int_K g(x*z^-)\, dm(x).
		\end{eqnarray*}
		Since $z \in Z(K),$ $\spt(\delta_x * \delta_{z^-})=\{xz^- \}$ is singleton and hence we get 
		\begin{eqnarray*}
			\int_K \Phi \left( \frac{|f^z(x)|}{k} \right)\, dm(x) = \int_K g(xz^-)\, dm(x)= \int_K \Phi \left( \frac{|f^z(xz^-)|}{k} \right)\, dm(x).
		\end{eqnarray*}
		Since $z \in Z(K)$ implies that  $\delta_{z^-}*\delta_z= \delta_e$ we have
		\begin{eqnarray*}
			\int_K \Phi \left( \frac{|f^z(x)|}{k} \right)\, dm(x) = \int_K \Phi \left( \frac{|f_x(z^-*z)|}{k} \right)\, dm(x)= \int_K \Phi \left( \frac{|f(x)|}{k} \right) \, dm(x).
		\end{eqnarray*}
		Therefore,
		\begin{eqnarray*}
			N_\Phi(f^z) &=& \inf\left\lbrace k>0: \int_K \Phi \left( \frac{|f^z(x)|}{k} \right)\, dm(x) \leq 1 \right\rbrace\\ &=& \inf\left\lbrace k>0: \int_K \Phi \left( \frac{|f(x)|}{k} \right)\, dm(x) \leq 1 \right\rbrace = N_\Phi(f).
	\end{eqnarray*} \end{proof}
	
	It is known that $L^\Phi(K)$ is Banach module over $M(K)$ \cite[Lemma 3.6]{Kumar}. Therefore, for any $f\in L^\Phi(K)$ and $\mu \in M(K),$ we have $f*\mu \in L^\Phi(K).$ In particular, $f*\delta_{z^n} \in L^\Phi(K)$ for all $f \in L^\Phi(K),\, z\in Z(K) $ and $n \in \mathbb{N}.$
	%=============================================================================
	%--------------------------------------------------------------------------------------
	%======================================================================================
	\section{Main Results}
	%------------------------------------------------------------------------------------
	In beginning of this section, we recall some linear dynamic concepts of operators which we need in this paper.  
	\begin{definition}
		Let $\mathcal X$ be a Banach space. A sequence $(T_n)_{n \in \mathbb{Z}_+}$ of bounded linear operators from $\mathcal X$ into $\mathcal X$ is called \emph{hypercyclic} if there exists an element $x$ in $\mathcal X$ (called \emph{hypercyclic vector}) such that the set $\{T_0(=I_{\mathcal{X}}) x,T_1x,\ldots\}$ is dense in $\mathcal X$. We say that $(T_n)$ is \emph{densely hypercyclic} if the set of all its hypercyclic vectors is dense in $\mathcal X$. A bounded linear operator $T$ on $\mathcal X$ is called \emph{hypercyclic} (\emph{densly hypercyclic}) if the sequence $(T^n)$ is hypercyclic (densely hypercyclic), where $T^n$ is the $n$-th iterate of $T$.
	\end{definition} 
	The set of all bounded linear operators on a Banach space $\mathcal X$ is denoted by $B(\mathcal X)$.
	This is well-known that there is a hypercyclic operator on a Banach space $\mathcal X$ if and only if  the space $\mathcal X$ is separable and infinite dimensional.
	
	\begin{definition}
		Let $\mathcal X$ be a Banach space. A sequence $(T_n)$ in $B(\mathcal X)$ is called \emph{topologically transitive} if for each two non-empty open sets $U, V \subseteq \mathcal X$ there exists $n \in \mathbb{N}$ such that $T_n(U) \cap V \neq \varnothing$. If the condition $T^n(U) \cap V \neq \varnothing$ holds for some $n$ onwards then $(T_n)$ is called \emph{topologically mixing}.  A bounded linear operator $T$ on  $\mathcal X$ is called \emph{topologically transitive} (\emph{topologically mixing}) if the sequence $(T^n)$ is topologically transitive (topologically mixing). 
	\end{definition}
	%=====================================================================
	In this section, $K$ denotes a hypergroup equipped with a right Haar measure $m$, and  $\Phi$ denotes a Young function. We find some necessary and sufficient conditions for a sequence of operators on $L^\Phi(K)$, generated by a weight and an aperiodic sequence in $K$, to be hypercyclic.
	A main tool in the proof of results is the concept aperiodic sequence.
	
	An element $a$ in a locally compact group $G$
	is called {\it aperiodic } (or non-compact) if the closed subgroup of $G$ generated by $a$ is not compact. In \cite[Lemma 2.1]{che1} it is proved that if $G$ is a second countable group, then $a\in G$ is aperiodic if and only if for each compact subset $E$ of $G$, there exits $N>0$ such that $E\cap Ea^{n}=\varnothing$ (and so $E\cap Ea^{-n}=\varnothing$) for all $n\geq N$. This characterization of aperiodic elements of a group leads to introduce the following suitable analogues of the notion of aperiodic element in the setting of hypergroup. We recall the definition of aperiodicity of a center element of $K$ from \cite[Definition 3.3]{che2}.
	
	\begin{definition} An element $z \in Z(K)$ is called {\it aperiodic}  if for each compact subset $E \subseteq K$ with $m(E)>0,$ there exists $N \in \mathbb{N}$ such that $E \cap (E*\{z\}^n)= \varnothing$ for all $n \geq N.$ 
	\end{definition}
	
	The following lemma gives an equivalent condition of aperiodicity of a center element of a hypergroup.
	\begin{lemma} \cite[Lemma 3.4]{che2}. \label{t3.4} An element $z \in Z(K)$ is aperiodic if and only if for each compact subset $E \subset K$ with $m(E)>0,$ there exists $N \in \mathbb{N}$ such that $(E*\{z\}^{rn}) \cap (E*\{z\}^{sn}) = \varnothing$ for $n \neq N$ and $r,s \in \mathbb{Z}$ with $r \neq s,$ where $\{z\}^{-n}= \{z^-\}^n.$ 
	\end{lemma}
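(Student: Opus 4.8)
Throughout, the key reduction I would exploit is that for a \emph{central} element $z$ the sets $E*\{z\}^k$ behave exactly like right translates in a group. Indeed, since $z\in Z(K)$ we have $\{z\}^n=\{z^n\}$ and, for every $x\in K$ and $k\in\mathbb Z$, $\spt(\delta_x*\delta_{z^k})=\{xz^k\}$ is a single point satisfying $(xz^j)z^k=xz^{j+k}$ and $xz^0=x$ (these follow from associativity of $\ast$ together with the properties of central elements recorded before the statement, with $\{z\}^{-n}=\{z^-\}^n$). Hence each $E*\{z\}^k$ equals the right translate $Ez^k:=\{xz^k:x\in E\}$, and right multiplication by $z^k$ is a bijection of $K$ with inverse right multiplication by $z^{-k}$. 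This converts the whole statement into a group-theoretic cancellation argument.

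For the backward implication the plan is to simply specialise $r=1,\ s=0$. Since $\{z\}^0=\{e\}$ gives $E*\{z\}^0=E$, the hypothesis applied to a compact $E$ with $m(E)>0$ produces an $N$ with $(E*\{z\}^{n})\cap E=\varnothing$ for all $n\ge N$, which is precisely the definition of aperiodicity of $z$. For the forward implication I would assume $z$ aperiodic, fix a compact $E\subseteq K$ with $m(E)>0$, and choose $N$ so that $E\cap(E*\{z\}^m)=\varnothing$ for all $m\ge N$. Given $n\ge N$ and $r\neq s$, by the symmetry of the intersection in $r$ and $s$ I may assume $r>s$. If some point $p$ belonged to $(E*\{z\}^{rn})\cap(E*\{z\}^{sn})$, then $p=xz^{rn}=yz^{sn}$ for some $x,y\in E$; multiplying on the right by $z^{-sn}$ and using $(xz^{rn})z^{-sn}=xz^{(r-s)n}$ and $z^{sn}z^{-sn}=e$ yields $y=xz^{(r-s)n}$, so that $y\in E\cap\bigl(E*\{z\}^{(r-s)n}\bigr)$. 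As $r-s\ge 1$ we have $(r-s)n\ge n\ge N$, and this set is empty by the choice of $N$, a contradiction; hence the two sets are disjoint, as required.

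The only point that genuinely needs care, and the step I expect to be the main (though modest) obstacle, is justifying the cancellation $xz^{rn}=yz^{sn}\Rightarrow xz^{(r-s)n}=y$ at the level of point masses rather than as formal symbol manipulation, i.e.\ verifying that right translation by the central element $z^{sn}$ is a well-defined, associative, and invertible operation on all of $K$. Once this is established (it rests only on the centrality of $z$ and associativity of $\ast$), the remainder of the argument is purely combinatorial and requires no further use of the measure $m$ beyond applying aperiodicity to $E$ itself.
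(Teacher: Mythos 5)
Your proposal is correct. Note that the paper itself offers no proof of this lemma --- it is quoted verbatim with a citation to \cite{che2} --- so there is nothing internal to compare against; but your argument is the natural (and essentially inevitable) one: since $z\in Z(K)$, each $\delta_x*\delta_{z^k}$ is a point mass, so $E*\{z\}^k$ is a genuine right translate $Ez^k$, right translation by $z^k$ is invertible with inverse translation by $z^{-k}$, and the whole statement reduces to the group-style cancellation $xz^{rn}=yz^{sn}\Rightarrow y=xz^{(r-s)n}$, which converts disjointness of the pair $(E*\{z\}^{rn}, E*\{z\}^{sn})$ into the aperiodicity condition for the single power $(r-s)n\geq n\geq N$; the converse is the specialization $r=1$, $s=0$. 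Two minor points you handled correctly but should state explicitly if writing this up: the ``$n\neq N$'' in the statement is a typo for $n\geq N$, and the identity $(xz^j)z^k=xz^{j+k}$ rests on associativity of convolution in $\mathcal{M}(K)$ together with $\delta_{z^j}*\delta_{z^k}=\delta_{z^{j+k}}$, valid for all integers $j,k$ because $Z(K)$ is a group under the induced operation.
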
 
	%=============================================
	Now, we give a version of this definition for a sequence of elements of hypergroup.
	%=======================================
	\begin{definition}\label{ape}
		Let $K$ be a locally compact hypergroup, and $\eta:=(a_n)_{n\in\mathbb Z}\subseteq K$. Then, $\eta$  is called an {\it aperiodic sequence} in $K$  if
		\begin{enumerate}
			\item $a_0=e$ and for all $n\in\mathbb{N}$, $a_{-n}=a_n^-$;
			\item for every compact subset $E$ of $K$ with $m(E)>0$, there exists $N>0$ such that for any $n\geq N$,  $E\cap(E\ast\{a_{\pm n}\})=\varnothing$.
		\end{enumerate}
	\end{definition}
	%------------------------------------------------
	By the above Definition \ref{ape} and \cite[Lemma 2.1]{che1}, if $G$ is a second countable group and $a\in G$,  setting $a_n:=a^n$ for all $n\in\mathbb Z$,  $a$ is aperiodic if and only if $\eta:=(a_n)_{n\in\mathbb Z}$ is an aperiodic sequence in $G$.
	%-----------------------------------------------------------
	Now, we present some examples of aperiodic sequences in some hypergroups. 
	\begin{example}
		Let $0<a\leq\frac{1}{2}$, and $\mathbb{N}_0:=\{0,1,2,\ldots\}$. For each $r,s\in\mathbb{N}_0$, define
		$$\delta_r\ast\delta_s:=\left\{
		\begin{array}{ll}
		\delta_{\text{max}\{r,s\}},& \mbox{ if } r\neq s,\\\\
		\frac{a^r}{1-a}\delta_0+\sum_{k=1}^{r-1}a^{r-k}\delta_k+\frac{1-2a}{1-a}\delta_r, & \mbox{ if } r=s.
		\end{array}
		\right.$$
		
		Then, $(\mathbb{N}_0,\ast)$ is a Hermitian discrete hypergroup.
		This important class of hypergroups was introduced by Dunkl and Ramirez  in \cite{dun2}. Recently, in \cite{KKA,KKAadd} first author with Singh and Ross studied classification results of such classes of hypergroups arising from the discrete semigroups (see also \cite{Voit}) with applications to Ramsey theory \cite{KumarRamsey}. The above convolution shows that if $E$ is a non-trivial compact (and so finite) subset of $\mathbb{N}_0$ and $n\in\mathbb{N}_0\backslash E$ is greater than all elements of $E$, then $E\ast\{n\}=\{n\}$. Now, let $(\beta_n)_{n=1}^\infty$ be an unbounded sequence in $\mathbb{N}_0$. Put $a_n=a_{-n}:=\beta_n$ for all $n\in \mathbb{N}$ and $a_0:=0$. Then, $\eta:=(a_n)_{n\in\mathbb Z}$ is an aperiodic sequence in the hypergroup $(\mathbb{N}_0,\ast)$.
	\end{example}
	%---------------------------------------------------------------
	%-----------------------------------------------------------------------------------------
	\begin{example}\label{exa2}
		Consider the group $SU(2)$ of all unitary transformations of $\Bbb{C}^2$  with determinant 1. Then, the set of all continuous unitary irreducible representations of the group $SU(2)$ can be indexed by $\mathbb{N}_0\equiv\{T^{(0)}, T^{(1)}, T^{(2)}, \ldots\}$ where $T^{n}$ has dimension $n$, and for each $m,n\in\mathbb{N}_0$, the tensor product of $T^{(n)}$ and $T^{(m)}$ is unitarily equivalent to
		$$T^{|m-n|}\oplus T^{|m-n|+2}\oplus\ldots.\oplus T^{(m+n)}.$$
		See \cite[Example 29.13]{hew}. Then, the discrete space $\mathbb{N}_0$ equipped with the convolution
		\begin{equation}\label{con}
		\delta_m\ast\delta_n:=\sum_{k=|m-n|}^{m+n} \!\!\!\!\!\!\!\!\text{o} \,\,\,\,\,\frac{k+1}{(m+1)(n+1)}\delta_k
		\end{equation}
		is a Hermitian discrete hypergroup, where $\sum \!\!\!\text{o}$  denotes that only every second term appears in the sum. This hypergroup is called $SU(2)$--hypergroup  (for more details refer to \cite[1.1.15]{blm}). Then, one can see that every sequence $(a_n)_{n\in\mathcal Z}$ in $\mathbb{N}_0$ with $a_0:=0$ and $a_{-n}=a_n^-(=a_n)$ is aperiodic if and only if  it does not have any constant subsequence. For this, suppose that $(a_n)_{n\in\mathbb Z}$ does not have any constant subsequence, and let $E$ be a compact (and so finite) subset of $\mathbb{N}_0$ with $m(E)>0$. Then, there is some $N>0$ such that for each $n\geq N$,  $a_n\geq 1+2\max E$. So by  \eqref{con}, for each $n\geq N$, we have $E\cap(E\ast\{ a_n\})=\varnothing$, and hence $(a_n)_{n\in\mathbb Z}$ is aperiodic. The converse is trivial. A similar conclusion can be proved for general polynomial hypergroups.
	\end{example}
	%----------------------------------------------
	In the below definition we define a sequence of operators on an Orlicz space by a fixed weight function and sequence of elements of the given hypergroup.
	%==============================================
	\begin{definition}\label{lamd}
		Any bounded continuous function $w:K\rightarrow(0,+\infty)$ is called a {\it weight} on $K$.
		Suppose that $\eta:=(a_n)_{n\in\mathbb Z}$ is a sequence in $K$, and $w$ is a weight function on $K$. For each $x\in K$ and $n\in\mathbb N$, we define a sequence of operators $(\Lambda_n)$ on $L^\Phi(K)$ by
		\begin{equation}\label{lam}
		\Lambda_n f(x):=w(x)\cdot w(x\ast a_{-1})\cdots w(x\ast a_{-n})\cdot f(x\ast a_{-n})\qquad (f\in L^\Phi(K)).
		\end{equation}
	\end{definition}
	%============================================
	The following lemma shows that for any $n\in\mathbb N$, the operator $\Lambda_n:L^\Phi(K)\rightarrow L^\Phi(K)$ is well-defined.
	%==================================================
	
	In sequel of this paper we assume that $(\Phi,\Psi)$ is a complimentary pair such that $\Psi$ is increasing.
	%===========================================
	\begin{lemma} \label{3.2} Let $w$ be a weight on $K$, and $a\in K$. Then, $wf^a \in L^\Phi(K)$ and $\|wf^a\|_\Phi\leq \|w\|_{\sup}\,\|f\|_\Phi$ for all $f \in L^\Phi(K)$.
	\end{lemma}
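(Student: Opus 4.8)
The plan is to peel off the weight $w$ using its boundedness and then reduce the whole statement to the contractivity of right translation in the Orlicz norm, namely $\|f^a\|_\Phi\le\|f\|_\Phi$. First I would observe that since $w$ is a weight we have $|w(x)|\le\|w\|_{\sup}$ for all $x\in K$, so for every $v\in\Omega_\Psi$, $\int_K|w f^a v|\,dm\le\|w\|_{\sup}\int_K|f^a v|\,dm$; taking the supremum over $v\in\Omega_\Psi$ gives $\|wf^a\|_\Phi\le\|w\|_{\sup}\,\|f^a\|_\Phi$. Membership $f^a\in L^\Phi(K)$ is free from the module structure, since $f^a=f\ast\delta_{a^-}$ and $L^\Phi(K)$ is a Banach module over $M(K)$ by \cite[Lemma 3.6]{Kumar}; the finiteness of $\|f^a\|_\Phi$ established below then shows $wf^a\in L^\Phi(K)$ as well, because a measurable function with finite Orlicz norm lies in $L^\Phi(K)$.

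To prove $\|f^a\|_\Phi\le\|f\|_\Phi$ I would work from the dual definition of $\|\cdot\|_\Phi$. Since $\delta_x\ast\delta_a$ is a probability measure, $|f^a(x)|=\big|\int_K f\,d(\delta_x\ast\delta_a)\big|\le\int_K|f|\,d(\delta_x\ast\delta_a)=(|f|)^a(x)$, whence $\int_K|f^a v|\,dm\le\int_K(|f|)^a|v|\,dm$ for any $v\in\Omega_\Psi$. The key step is then the adjoint identity $\int_K(|f|)^a(x)\,|v(x)|\,dm(x)=\int_K|f|(y)\,(|v|)^{a^-}(y)\,dm(y)$, which I would derive by expanding the left-hand side as a double integral against $d(\delta_x\ast\delta_a)(y)\,dm(x)$, using Fubini (all integrands being nonnegative), and invoking the right invariance of $m$ recorded in \cite[3.3F]{jew} and already used in Lemma \ref{1.1}, equivalently $\int_K g^b\,dm=\int_K g\,dm$ for every $b\in K$.

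It then remains to check that the translated test function $(|v|)^{a^-}$ is again admissible, i.e.\ $(|v|)^{a^-}\in\Omega_\Psi$. This is exactly where convexity of $\Psi$ and Jensen's inequality \eqref{jen} enter: applying \eqref{jen} to the probability measure $\delta_y\ast\delta_{a^-}$ gives $\Psi\big((|v|)^{a^-}(y)\big)\le\int_K\Psi(|v|)\,d(\delta_y\ast\delta_{a^-})=(\Psi\circ|v|)^{a^-}(y)$, and integrating over $K$ and applying right invariance once more yields $\int_K\Psi\big((|v|)^{a^-}\big)\,dm\le\int_K\Psi(|v|)\,dm\le1$. Hence $(|v|)^{a^-}\in\Omega_\Psi$, so by the very definition of $\|\cdot\|_\Phi$ we get $\int_K|f|\,(|v|)^{a^-}\,dm\le\|f\|_\Phi$. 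Combining the three paragraphs and taking the supremum over $v\in\Omega_\Psi$ delivers $\|wf^a\|_\Phi\le\|w\|_{\sup}\,\|f\|_\Phi$.

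The step I expect to be the main obstacle is the adjoint identity in the second paragraph: in a hypergroup there is no substitution $y=x\ast a$ available, so the equality of the two iterated integrals must be extracted purely from Fubini together with the right invariance $m\ast\delta_a=m$, with some care that the convolution measures $\delta_x\ast\delta_a$ genuinely vary with $x$ and that all quantities stay nonnegative and finite. By contrast, the pointwise domination $|f^a|\le(|f|)^a$, the Jensen estimate, and the final duality bound are routine once right invariance is in hand; and if the module inequality of \cite[Lemma 3.6]{Kumar} is available directly in the norm $\|\cdot\|_\Phi$, it yields $\|f^a\|_\Phi=\|f\ast\delta_{a^-}\|_\Phi\le\|f\|_\Phi\,\|\delta_{a^-}\|=\|f\|_\Phi$ at once, bypassing the adjoint identity altogether.
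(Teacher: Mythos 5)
Your proposal is correct and follows essentially the same route as the paper's own proof: factor out $\|w\|_{\sup}$, transfer the translation from $f$ to the dual test function via right invariance of the Haar measure, and verify that the translated test function stays in $\Omega_\Psi$ using Jensen's inequality \eqref{jen} for the probability measure $\delta_x\ast\delta_{a^-}$ together with \cite[3.3F]{jew}. If anything, you are more careful than the paper, which passes silently through the domination $|f^a|\le(|f|)^a$ and the Fubini argument behind the adjoint identity, and your closing remark that the module bound of \cite[Lemma 3.6]{Kumar} would short-circuit the whole computation is a valid observation, not a gap.
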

	\begin{proof}
		Let $a\in K$. For each $f\in L^\Phi(K)$ we have 
		\begin{align*}
		\|wf^a\|_\Phi&=\sup_{v\in\Omega_\Psi}\int_K |wf^av|\,dm\\
		&\leq \|w\|_{\sup}\,\sup_{v\in\Omega_\Psi}\int_K |f(x\ast a)v(x)|\,dm(x)\\
		&=\|w\|_{\sup}\,\sup_{v\in\Omega_\Psi}\int_K |f(x)v^{a^-}(x)|\,dm(x)\\
		&\leq\|w\|_{\sup}\,\sup_{v\in\Omega_\Psi}\int_K |f(x)v(x)|\,dm(x)\\
		&=\|w\|_{\sup}\,\|f\|_\Phi,
		\end{align*}
		since for each $v\in\Omega_{\Psi}$ and $b\in K$ we have $v^b\in\Omega_\Psi$. To see this fact, first note that for any $v \in \Omega_\Phi$ and $b \in K,$  we have
		\begin{eqnarray*}
			\int_K \Psi(|v^b(x)|)\, dm(x) &=&\int_K \Psi\left( \left|\int_K v(z) \,d(\delta_x*\delta_b)(z)\right| \right) \, dm(x)\\  &\leq& \int_K \Psi \left( \int_K |v(z)| \,d(\delta_x*\delta_b)(z) \right) \, dm(x).\end{eqnarray*}
		By using Jensen's inequality \cite[Proposition 5, p. 62]{Rao} we get,
		\begin{eqnarray*}
			\int_K \Psi(|v^b(x)|)\, dm(x) &\leq & \int_K \left( \int_K \Psi\left(|v|\right)(z)\,\, d(\delta_x*\delta_b)(z) \right) \, dm(x) \\& =& \int_K \Psi\left( |v|\right)^b(x) \,dm(x) = \int_K \Psi\left( |v|\right)(x) \,dm(x).
		\end{eqnarray*}   
		where the last equality follows from \cite[Lemma 3.3 F]{jew}. Therefore, if $v \in \Omega_\Psi,$ i.e., $\int_K \Psi\left( |v|\right)(x) \,dm(x) \leq 1$ then $\int_K \Psi\left( |v^b(x)|\right) \,dm(x) \leq 1$ and hence $v^b \in \Omega_\Psi.$ 
		%Let $a \in K.$ Since $f \in L^\Phi(K),$ it follows from \cite[Lemma 3.1]{Kumar} that $f^a \in L^\Phi(K)$  and therefore, there exist $C>0$ such that $\int_K \Phi(C|f^a|)\, dm < \infty.$ By definition, $w$ is bounded so there exist $M>0$ such that  $|w(x)| \leq M$ for all $x \in K.$ Then, as $\Phi$ is an increasing function, we get 
		%\begin{eqnarray*}
		%	\int_K  \Phi\left( \frac{C}{M+1}|wf| \right)\, dm \leq \int_K \Phi\left(\frac{M}{M+1} C|f| \right)\, dm. 
		%	\end{eqnarray*}
		%Using the the convexity of $\Phi,$ we deduce that 
		%$$ \int_K  \Phi\left( \frac{C}{M+1}|wf^a| \right)\, dm \leq \int_K \frac{M}{M+1} \Phi(C|f^a|)\, dm \leq \int_K \Phi(C|f^a|)dm < \infty.$$ 
		%Therefore, $wf^a \in L^\Phi(K).$ 
	\end{proof}
	%=============================================
	\begin{remark}
		Suppose that $K$ is a locally compact hypergroup, $a\in K$, and $w$ is a weight on $K$. Then, define the bounded linear operator $T_{a,w}$ on the Orlicz space $L^\Phi(K)$ by
		\begin{equation}
		T_{a,w}f(x):=w(x)f(x\ast a^-),\quad(f\in L^\Phi(K)).
		\end{equation}
		In the case that $K$ is a locally compact group, for each $f\in L^\Phi(K)$, $n\in\mathbb N$ and $x\in K$, we have
		$$\Lambda_n f(x)=w(x)\cdot w(xa_{-1})\cdots w(xa_{-n})\cdot f(xa_{-n}).$$
		In particular, if $a\in K$, then  we have  $T_{a,w}^n=\Lambda_n$ for all $n\in\mathbb{N}_0$, where $(\Lambda_n)$ is the sequence defined by \eqref{lam} corresponding to  $\eta:=(a^n)_{n\in\mathbb Z}$. Since in general we do not have any action between elements of a hypergroup, we study the linear dynamic properties of the sequence $(\Lambda_n)$ of operators as in Definition \ref{lamd} for hypergroups.
	\end{remark}
	Although the term \emph{hypercyclic} is used for bounded linear operators on Banach spaces, since the above sequence $(\Lambda_n)$ is determined by a given weight  $w$ and a sequence $\eta$, this term has been used for weights in next definition.
	
	%--------------------------------------------------------------------------------------------
	\begin{definition}\label{hyper}
		Let $\eta:=(a_n)_{n\in\mathbb{Z}}$ be a sequence in $K$, and $\Phi$ be a Young function. A weight $w$ on $K$ is called $(\eta,\Phi)$-{\it hypercyclic}  if there is a function $f\in L^\Phi(K)$ (called a {\it hypercyclic vector}) such that the set $\{ f,\Lambda_1 f,\Lambda_2 f,\ldots\}$ is dense in $L^\Phi(K)$. An $(\eta,\Phi)$-hypercyclic weight is called {\it densely} $(\eta,\Phi)$-hypercyclic if the set of its hypercyclic vectors is dense in $L^\Phi(K)$. Also, an $(\eta,\Phi)$-hypercyclic weight is called  {\it positively densely hypercyclic} if for each $g\in L^\Phi_+(K)$ and $\varepsilon>0$, there exist a vector $f\in L^\Phi(K)$ and a large enough natural number $n$ such that $\|\Lambda_nf-g\|_\Phi<\varepsilon$ and $\|f-g\|_\Phi<\varepsilon$.
	\end{definition}
	%---------------------------------------------------------------------------------------
	
	\begin{remark}\label{222}
		Let $\eta:=(a_n)_{n\in\mathbb{Z}}$ be a sequence in a hypergroup $K$,  $\Phi$ be a Young function and $w$ be a weight function on $K$. Then, $w$ is $(\eta,\Phi)$- hypercyclic if and only if the sequence $(\Lambda_n)$ given by \eqref{lam} is a hypercyclic sequence of operators on $L^\Phi(K)$.  In particular, if $K$ is a locally compact group and $a$ is an  element of $K$, then by the above remark and definition, the operator  $T_{a,w}:L^\Phi(K)\rightarrow L^\Phi(K)$ is hypercyclic if and only if $w$ is $(\eta,\Phi)$-hypercyclic, where $\eta:=(a^n)_{n\in\mathbb{Z}}$. 
	\end{remark}
	
	In the following result we  give a necessary condition for a weight to be positively densely $(\eta,\Phi)$-hypercyclic whenever the condition $L^\Phi(K)\subseteq L^1(K)$ satisfies. In Remark \ref{ddd} we give some explanations about this inclusion.
	%---------------------------------------------------------------------------
	\begin{theorem}\label{33}
		Let $K$ be a locally compact hypergroup and let $w$ be a weight on $K$.
		Suppose that $\eta:=(a_n)_{n\in\mathbb Z}$ is an aperiodic sequence in $K$ and $\Phi$ is a Young function such that $L^\Phi(K)\subseteq L^1(K)$.
		If $w$ is positively densely $(\eta,\Phi)$-hypercyclic, then for every compact subset $E\subseteq K$ with $m(E)>0$, there exist a sequence $(E_k)_{k=1}^\infty$ of Borel subsets of  $E$, and  a strictly increasing  sequence $(n_k)_{k=1}^\infty\subseteq\mathbb{N}$
		such that $\lim_{k\rightarrow\infty}m(E_k)=m(E)$ and
		$\lim_{k\rightarrow\infty}\|w_{n_k}|_{E_k}\|_\infty=0,$
		where for all $n\in\mathbb{N}$ and $x\in K$, 
		$v_n(x):=w(x)\cdot w(x\ast a^{-}_{1})\cdots w(x\ast a^{-}_{n}),$
		and  $w_n:=((\chi_E)^{a^{-}_{n}} v_n)^{a_n}$. 
	\end{theorem}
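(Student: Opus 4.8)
The plan is to feed the single test function $g=\chi_E$ into the positive dense hypercyclicity hypothesis and track what the two resulting approximations force on the weight products. Fixing a compact $E$ with $m(E)>0$, for each $k\in\mathbb N$ I would invoke positive dense $(\eta,\Phi)$-hypercyclicity with $\varepsilon=1/k$ to produce $f_k\in L^\Phi(K)$ and a natural number $n_k$, chosen large and strictly increasing in $k$ (which the phrase ``large enough $n$'' in Definition \ref{hyper} permits), so that $\|f_k-\chi_E\|_\Phi<1/k$ and $\|\Lambda_{n_k}f_k-\chi_E\|_\Phi<1/k$. Since $L^\Phi(K)\subseteq L^1(K)$, the inclusion is continuous by the closed graph theorem, so there is $C>0$ with $\|\cdot\|_1\le C\|\cdot\|_\Phi$; hence both differences also have $L^1$-norm less than $C/k$. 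Throughout I write $\varphi_n:=(\chi_E)^{a_n^-}$, so that $0\le\varphi_n\le 1$, and I record the two elementary identities $\Lambda_nf=v_n\,f^{a_n^-}$ and $\Lambda_n(\chi_E)=v_n\varphi_n$.

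The first real use of the hypotheses combines the approximation with aperiodicity. For $k$ large, Definition \ref{ape}(2) gives $E\cap(E\ast\{a_{\pm n_k}\})=\varnothing$, which forces $\chi_E$ to vanish on the support of $\varphi_{n_k}$. Multiplying $\Lambda_{n_k}f_k-\chi_E$ by $\varphi_{n_k}\le 1$ and integrating, the $\chi_E$ term drops out and I obtain
\[
\int_K v_{n_k}\,\bigl|f_k^{a_{n_k}^-}\bigr|\,\varphi_{n_k}\,dm=\int_K\bigl|\Lambda_{n_k}f_k\bigr|\varphi_{n_k}\,dm\le \|\Lambda_{n_k}f_k-\chi_E\|_1<C/k .
\]
The delicate point — and the step I expect to be the main obstacle — is that the weight products $v_{n_k}$ are in general unbounded, so one cannot simply estimate $v_{n_k}\varphi_{n_k}$ by the triangle inequality after replacing $f_k^{a_{n_k}^-}$ by $\varphi_{n_k}$: the cross term $\int v_{n_k}\varphi_{n_k}\,\bigl|(f_k-\chi_E)^{a_{n_k}^-}\bigr|\,dm$ cannot be controlled directly.

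To bypass this I would run a good-set argument. Since $(f_k-\chi_E)^{a_{n_k}^-}$ is $L^1$-small, by invariance of $m$ under translation (the property \cite[3.3F]{jew} already used in Lemma \ref{1.1} and Lemma \ref{3.2}), Markov's inequality shows that off a set of measure $O(1/k)$ one has $\bigl|f_k^{a_{n_k}^-}\bigr|\ge\varphi_{n_k}-\tfrac14$. On the portion of $\{\varphi_{n_k}\ge\tfrac12\}$ where this holds, $\bigl|f_k^{a_{n_k}^-}\bigr|\ge\tfrac14$, so the displayed inequality upgrades to $v_{n_k}\le 4\bigl|\Lambda_{n_k}f_k\bigr|$ there, yielding an $L^1$-bound $\int_{S_k}\varphi_{n_k}v_{n_k}\,dm=O(1/k)$ on a set $S_k$ whose complement inside the region $\{\varphi_{n_k}\ge\tfrac12\}$ has measure $O(1/k)$. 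This is the device that tames the unboundedness of $v_{n_k}$ without ever integrating it over a region where it is large.

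Finally I would transfer this information from the neighbourhood of $E\ast a_{n_k}$ back onto $E$. The relation $w_n=(\varphi_n v_n)^{a_n}$ together with the adjoint identity $\int_K h^{a_{n_k}}\chi_F\,dm=\int_K h\,(\chi_F)^{a_{n_k}^-}\,dm$ for the right Haar measure lets me rewrite $\int_F w_{n_k}\,dm$, for $F\subseteq E$, as an integral of $\varphi_{n_k}v_{n_k}$ against $(\chi_F)^{a_{n_k}^-}$. Choosing $F=\widetilde E_k\subseteq E$ whose translate is carried into the good set $S_k$, this produces a Borel set $\widetilde E_k$ with $m(E\setminus\widetilde E_k)\to0$ and $\int_{\widetilde E_k}w_{n_k}\,dm\to0$; one more application of Markov's inequality inside $\widetilde E_k$ then gives sets $E_k\subseteq E$ with $\|w_{n_k}|_{E_k}\|_\infty\le\bigl(\int_{\widetilde E_k}w_{n_k}\,dm\bigr)^{1/2}\to0$ while $m(E_k)\to m(E)$, which is the assertion. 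The genuinely hypergroup-specific difficulty, concentrated in this last transfer, is that $\varphi_n=(\chi_E)^{a_n^-}$ is only $[0,1]$-valued rather than an indicator, so the passage between the region around $E\ast a_{n_k}$ and $E$ must be carried out through the measure-valued convolutions and Haar invariance rather than by a pointwise change of variables, and the construction of a \emph{large} admissible $\widetilde E_k$ is where the argument must be made with care.
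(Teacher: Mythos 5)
Your opening moves coincide with the paper's proof: feed $g=\chi_E$ into the hypothesis to get $f_k$ and strictly increasing $n_k$ with both $\Phi$-norm errors small, get $\|\cdot\|_1\le C\|\cdot\|_\Phi$ from the closed graph theorem, use aperiodicity to make $E$ and $E\ast\{a_{n_k}\}$ disjoint, and use the $L^1$-contractivity of translation. The genuine gap is your second half, the ``good set downstairs, then transfer back to $E$'' device. Your estimate controls $\varphi_{n_k}v_{n_k}$ only on $S_k\subseteq\{\varphi_{n_k}\ge\tfrac12\}$, while the transfer $\int_F w_{n_k}\,dm=\int_K\varphi_{n_k}v_{n_k}\,(\chi_F)^{a_{n_k}^-}\,dm$ requires $(\chi_F)^{a_{n_k}^-}$ to vanish a.e.\ on the uncontrolled region $\{0<\varphi_{n_k}<\tfrac12\}\cup\bigl(\{\varphi_{n_k}\ge\tfrac12\}\setminus S_k\bigr)$, where $v_{n_k}$ can be arbitrarily large. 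No large admissible $\widetilde E_k$ need exist. The paper's own first example, the Dunkl--Ramirez hypergroup $(\mathbb{N}_0,\ast)$, kills the scheme outright: if $E\subseteq\{0,\dots,r\}$ and $a_{n}>r$, then $\delta_x\ast\delta_{a_{n}}=\delta_{a_{n}}$ for every $x\neq a_n$, so $\varphi_{n}=(\chi_E)^{a_{n}^-}$ vanishes everywhere except at the single point $a_{n}$, where its value $(\delta_{a_{n}}\ast\delta_{a_{n}})(E)$ is positive but tends to $0$ as $a_n$ grows. Thus $\{\varphi_{n}\ge\tfrac12\}=\varnothing$, so $S_k$ carries no information; and since $\delta_{a_{n}}\ast\delta_{a_{n}}$ charges every point of $\{0,\dots,a_n-1\}\supseteq E$, every nonempty Borel $F\subseteq E$ has $(\chi_F)^{a_{n}^-}(a_{n})>0$, i.e.\ the ``pullback'' of the single uncontrolled point is all of $E$. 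This is the general phenomenon you cannot rely on: hypergroup translation is an $L^1$-contraction only in the forward direction, and pulling a small-measure bad set back through a translation can blow its measure up arbitrarily. (The theorem is still true in that example; it is your route to it that collapses.)

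The paper avoids this by never localizing downstairs. It takes the approximants $f_k$ in $L^\Phi_+(K)$ and transports the controlled quantity globally by the duality identity for the right Haar measure,
\begin{equation*}
\int_K \varphi_{n_k}v_{n_k}\,f_k^{\,a_{n_k}^-}\,dm=\int_K(\varphi_{n_k}v_{n_k})^{a_{n_k}}f_k\,dm=\int_K w_{n_k}f_k\,dm<\tfrac{1}{M4^k},
\end{equation*}
and then defines the exceptional sets directly inside $E$: with $A_k:=\{x\in E:|f_k(x)-1|\ge2^{-k}\}$ and $B_k:=\{x\in E:w_{n_k}(x)f_k(x)\ge2^{-k}\}$, Markov's inequality gives $m(A_k),m(B_k)<\tfrac{1}{M2^k}$, and on $E_k:=E\setminus(A_k\cup B_k)$ one has pointwise $w_{n_k}(x)<2^{-k}/(1-2^{-k})$ because $f_k(x)>1-2^{-k}$ and $w_{n_k}(x)f_k(x)<2^{-k}$ there. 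Note that positivity of $f_k$ is genuinely used in this global step (without it one only has $|f_k^{\,a_{n_k}^-}|\le(|f_k|)^{a_{n_k}^-}$, which is the wrong direction for the duality rewriting); your lower-bound manoeuvre $|f_k^{\,a_{n_k}^-}|\ge\varphi_{n_k}-\tfrac14$ was evidently designed to dodge exactly this, but it is what forces the localization to $\{\varphi_{n_k}\ge\tfrac12\}$ and thereby creates the gap. To repair your write-up, replace the good-set/transfer paragraphs by the global duality step above, and address (as the paper does by fiat) why the positive dense hypercyclicity hypothesis supplies nonnegative approximants $f_k$.
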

	%--------------------------------------------------------------------------------------------
	\begin{proof}
		Let $w$ be a positively densely $(\eta,\Phi)$-hypercyclic weight, and $E$ be a compact subset of $K$ with $m(E)>0$.  By Definition  \ref{ape} and Definition \ref{hyper}, there exist a sequence  $(f_k)_{k=1}^\infty\subseteq L^\Phi_+(K)$ and a strictly increasing sequence $(n_k)_{k=1}^\infty\subseteq\mathbb{N}$ such that for each $k=1,2,\ldots$, $E\cap(E\ast\{a_{n_k}\})=\varnothing$ and
		$$\|f_k-\chi_E\|_\Phi<\frac{1}{4^{k}},\quad \|\Lambda_{n_k}f_k-\chi_E\|_\Phi<\frac{1}{4^k}.$$
		By the Closed Graph Theorem, from the inclusion $L^\Phi(K)\subseteq L^1(K)$ one can conclude that there exists a constant $M>0$ such that for each $f\in L^\Phi(K)$,
		\begin{equation}
		M\,\|f\|_1\leq \|f\|_\Phi.
		\end{equation}
		Set $A_k:=\{x\in E: |f_k(x)-1|\geq \frac{1}{2^k}\}$. Then, 
		\begin{align*}
		\frac{1}{M 4^k}>\frac{1}{M}\|f_k-\chi_E\|_\Phi&\geq \|f_k-\chi_E\|_1=\int_K|f_k(x)-\chi_E(x)|\,dm(x)\\
		&\geq \int_{E}|f_k(x)-1|\,dm(x)\geq \frac{1}{2^k} m(A_k),
		\end{align*}
		and so, $m(A_k)<\frac{1}{M\,2^k}$. 
		Put $B_k:=\{x\in E:((\chi_E)^{a^-_{n_k}} v_{n_k})^{a_{n_k}}(x)f_k(x)\geq \frac{1}{2^k}\}$. Then,
		\begin{align*}
		\frac{1}{M 4^k}&>\frac{1}{M}\|\Lambda_{n_k}f_k-\chi_E\|_\Phi
		\geq \|\Lambda_{n_k}f_k-\chi_E\|_1=\int_K |(\Lambda_{n_k}f_k-\chi_E)(x)|\,dm(x)\\
		&\geq \int_K |(\Lambda_{n_k}f_k-\chi_E)^{a_{n_k}}(x)|\,dm(x)
		=\int_K|\Lambda_{n_k}f_k(x\ast a_{{n_k}})-\chi_E(x\ast a_{n_k})|\,dm(x)\\
		&\geq \int_E|\Lambda_{n_k}f_k(x\ast a_{{n_k}})-\chi_E(x\ast a_{n_k})|\,dm(x)
		=\int_E |\Lambda_{n_k}f_k(x\ast a_{{n_k}})|\,dm(x)\\
		&=\int_K \chi_E(x) \cdot \Lambda_{n_k}f_k(x\ast a_{{n_k}})\,dm(x)
		=\int_K \chi_E(x\ast a_{-{n_k}}) \cdot \Lambda_{n_k}f_k(x)\,dm(x).
		\end{align*}
		Now, by the definition of operator $\Lambda_{n_k}$ we have
		\begin{align*}
		\frac{1}{M 4^k} & \geq \int_K \chi_E(x\ast a_{-{n_k}}) v_{n_k}(x) f_k(x\ast a_{-{n_k}}) \,dm(x)\\
		&=\int_K ((\chi_E)^{a_{-{n_k}}} v_{n_k})(x)f_k(x\ast a_{-{n_k}}) \,dm(x)
		=\int_K ((\chi_E)^{a_{-{n_k}}} v_{n_k})(x\ast a_{n_k})f_k(x) \,dm(x)\\
		&=\int_K ((\chi_E)^{a_{-{n_k}}} v_{n_k})^{a_{n_k}}(x)f_k(x) \,dm(x)
		\geq \int_{B_k} ((\chi_E)^{a_{-{n_k}}} v_{n_k})^{a_{n_k}}(x)f_k(x) \,dm(x)\\
		&\geq \frac{1}{2^k} m(B_k),
		\end{align*}
		where $v_n(x):=w(x)\cdot w(x\ast a_{-1})\cdots w(x\ast a_{-n})$. So, $m(B_k)<\frac{1}{M 2^k}$. Setting $E_k:=E\backslash(A_k\cup B_k)$ we have $\lim_{k\rightarrow\infty}m(E_k)=m(E)$.
		
		Also, let $w_n:=((\chi_E)^{a^-_{n}} v_n)^{a_n}$. Then, for each $x\in E_k$, one has
		$$w_{n_k}(x)<\frac{1}{2^k f_k(x)}<\frac{1}{2^k(1-\frac{1}{2^k})},$$
		and so, $\lim_{k\rightarrow\infty}\|w_{n_k}|_{E_k}\|_\infty=0$.
	\end{proof}
	%===================================================
	%====================================
	\begin{definition}
		A weight is called
		(positively) {\it chaotic} if it is (positively) densely hypercyclic and the set of its periodic element is (positively) dense.
	\end{definition}

	%----------------------------------------------------------------------------------------------------
	\begin{definition}\label{pe}
		Let $\eta:=(a_n)_{n\in\mathbb Z}$ be a sequence of elements in $K$, $\Phi$ be a Young function, and $w$ be a weight on $K$.
		A function $f\in L^\Phi(K)$ is called $(\eta,w)$-{\it periodic} if there exists a number $n\in\mathbb N$ such that for all $r\geq 1$, $\Lambda_{rn}f=f$. The set of all $(\eta,w)$-periodic elements  is denoted by $P_{\eta,w}$.
	\end{definition}
	%---------------------------------------------------
	%------------------------------------------------
	\begin{definition}\label{ape2}
		Let $K$ be a locally compact hypergroup, and $\eta:=(a_n)_{n\in\mathbb Z}\subseteq K$. Then, $\eta$ is called {\it strongly aperiodic} if
		\begin{enumerate}
			\item $a_0=e$, and for each $n\in\mathbb{N}$, $a^-_n=a_{-n}$;
			\item for all compact subset $E$ of $K$ with $m(E)>0$, there exists $N>0$ such that for any $n\geq N$ and all distinct $r,s\in \mathbb{Z}$,
			$$(E\ast\{a_{rn}\})\cap (E\ast\{a_{sn}\})=\varnothing.$$
		\end{enumerate}
	\end{definition}
	%-------------------------------------
	\begin{remark}
		If $G$ is a locally compact group, then for each set $E\subseteq G$, $a\in G$ satisfies $E\cap Ea^{\pm n}=\varnothing$ from some $n$ onward if and only if
		$Ea^{rn}\cap Ea^{sn}=\varnothing$ holds for any distinct integers $r$ and $s$. However, this is not the case for general hypergroups. Therefore, the condition (2)  in the above definition is a suitable replacement for the latter condition.
	\end{remark}
	%=======================================
	\begin{theorem}\label{22}
		Let $K$ be a locally compact hypergroup, $w$ be a weight on $K$, and $\Phi$ be a Young function such that $L^\Phi(K)\subseteq L^1(K)$. Let $\eta:=(a_n)_{n\in\mathbb Z}$ be a strongly aperiodic sequence in $K$. If $P_{\eta,w}$ is dense in $L^\Phi_+(K)$, then for every compact subset $E\subseteq K$ with $m(E)>0$, there exist a sequence $(E_k)_{k=1}^\infty$ of subsets of $E$, and a strictly increasing sequence $(n_k)_{k=1}^\infty\subseteq\mathbb{N}$
		such that  $\lim_{k\rightarrow\infty}m(E_k)=m(E)$ and
		$$\lim_{k\rightarrow \infty}\left(\sum_{r=1}^\infty\int_{E_k}(\chi_{E}^{a_{-sn_k}}v_{sn_k})^{a_{sn_k}}\,dm+\sum_{s=1}^\infty\int_{E_k}v_{sn_k}(x)^{-1}\,dm(x)\right)=0,$$
		where for all $n\in\mathbb{N}$ and $x\in K$,
		$v_n(x):=w(x)\cdot w(x\ast a^-_{1})\cdots w(x\ast a^-_{n}).$
	\end{theorem}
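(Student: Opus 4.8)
The plan is to follow the scheme of the proof of Theorem \ref{33}, replacing the hypercyclic vectors by periodic ones and then exploiting the \emph{simultaneous} periodicity relations to upgrade a single shift estimate into a convergent series. First I would fix a compact $E$ with $m(E)>0$ and, using the density of $P_{\eta,w}$ in $L^\Phi_+(K)$, choose for each $k$ a periodic $f_k\in P_{\eta,w}\cap L^\Phi_+(K)$ with $\|f_k-\chi_E\|_\Phi<\frac{1}{4^k}$. If $p_k$ is a period of $f_k$ (so $\Lambda_{rp_k}f_k=f_k$ for all $r\ge 1$), then every multiple of $p_k$ is again a period; invoking Definition \ref{ape2} I can therefore pick $n_k=m_kp_k$ so large that $(n_k)$ is strictly increasing and $(E*\{a_{rn_k}\})\cap(E*\{a_{sn_k}\})=\varnothing$ for all distinct $r,s\in\mathbb Z$. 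As in Theorem \ref{33}, the Closed Graph Theorem together with $L^\Phi(K)\subseteq L^1(K)$ yields a constant $M>0$ with $M\|f\|_1\le\|f\|_\Phi$, whence $\|f_k-\chi_E\|_1<\frac{1}{M4^k}$; setting $A_k:=\{x\in E:|f_k(x)-1|\ge\frac{1}{2^k}\}$ gives $m(A_k)<\frac{1}{M2^k}$, and I define $E_k:=E\setminus A_k$, so that $f_k\ge 1-\frac{1}{2^k}$ on $E_k$ and $m(E_k)\to m(E)$.

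The crucial new input is that periodicity makes every shift behave like the single shift of Theorem \ref{33}: since $\Lambda_{sn_k}f_k=f_k$, one has $\|\Lambda_{sn_k}f_k-\chi_E\|_1=\|f_k-\chi_E\|_1$ for every $s\ge 1$ at once. Repeating the translation computation from Theorem \ref{33} with $n_k$ replaced by $sn_k$, using $\chi_E(x*a_{sn_k})=0$ for $x\in E$ (strong aperiodicity) and the identity $\Lambda_{sn_k}f_k=f_k$, I obtain $\int_K w_{sn_k}f_k\,dm=\int_E f_k(x*a_{sn_k})\,dm(x)$, where $w_{sn_k}=(\chi_E^{a_{-sn_k}}v_{sn_k})^{a_{sn_k}}$ is exactly the summand of the first series. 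For $x\in E$ the point $x*a_{sn_k}$ lies in $E*\{a_{sn_k}\}$, disjoint from $E$, so $f_k(x*a_{sn_k})=|f_k-\chi_E|(x*a_{sn_k})$. Summing over $s$ and applying the adjoint relation $\int\varphi(x)\,\psi^{a_{-n}}(x)\,dm=\int\varphi^{a_n}(x)\,\psi(x)\,dm$ (valid by right-invariance of $m$ as already used in Lemma \ref{3.2} and Theorem \ref{33}), the series becomes $\int_K\bigl(\sum_s\chi_E^{a_{-sn_k}}\bigr)\,|f_k-\chi_E|\,dm$.

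This is where strong aperiodicity does the real work. Using the standard hypergroup equivalence $z\in\spt(\delta_x*\delta_y)\iff x\in\spt(\delta_z*\delta_{y^-})$, the support of $\chi_E^{a_{-sn_k}}$ is contained in $E*\{a_{sn_k}\}$; since $0\le\chi_E^{a_{-sn_k}}\le 1$ and these supports are pairwise disjoint over $s\ge 1$, we get $\sum_s\chi_E^{a_{-sn_k}}\le 1$ pointwise. Hence $\sum_s\int_K w_{sn_k}f_k\,dm\le\|f_k-\chi_E\|_1<\frac{1}{M4^k}$, and dividing by $1-\frac{1}{2^k}$ on $E_k$ gives $\sum_s\int_{E_k}w_{sn_k}\,dm\to 0$, the first series. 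The second series is handled symmetrically: from $v_{sn_k}(x)^{-1}=f_k(x*a_{-sn_k})/f_k(x)$ and $f_k\ge 1-\frac{1}{2^k}$ on $E_k$, the same disjointness argument, now applied to $\chi_{E_k}^{a_{sn_k}}$ whose supports lie in the pairwise disjoint sets $E*\{a_{-sn_k}\}$, bounds $\sum_s\int_{E_k}v_{sn_k}^{-1}\,dm$ by $\frac{1}{1-1/2^k}\|f_k-\chi_E\|_1\to 0$.

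I expect the main obstacle to be precisely this support/disjointness step: verifying that $\spt(\chi_E^{a_{-sn_k}})\subseteq E*\{a_{sn_k}\}$ through the involution axioms, so that condition (2) of Definition \ref{ape2} upgrades the per-shift estimate of Theorem \ref{33} into a summable one. This is exactly the point at which \emph{strong} aperiodicity, rather than the mere aperiodicity of Definition \ref{ape}, becomes indispensable; everything else is a faithful repetition of the Closed Graph Theorem and translation manipulations already established.
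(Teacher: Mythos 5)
Your proposal is correct and takes essentially the same route as the paper's proof: periodic approximants $f_k$ of $\chi_E$ whose periods (after passing to multiples) exceed the strong-aperiodicity threshold, the Closed Graph Theorem bound $M\|\cdot\|_1\le\|\cdot\|_\Phi$, the exceptional set $A_k$, the periodicity identity $\Lambda_{sn_k}f_k=f_k$, and the pairwise disjointness of the translates $E\ast\{a_{sn_k}\}$ to make the estimates summable. The only differences are bookkeeping: you bound the two series separately by $\|f_k-\chi_E\|_1$ via the pointwise inequality $\sum_s\chi_E^{a_{-sn_k}}\le 1$, whereas the paper collects both at once under $\int_{K\setminus E}f_k\,dm$, and you omit the paper's auxiliary set $B_k$, which its own final estimate never actually uses.
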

	%----------------------------------------------------------------------------------------------------------
	\begin{proof}
		Assume that $P_{\eta,w}$ is dense in $L_+^\Phi(K)$. Let $E\subseteq K$ be compact and $m(E)>0$. Since $\eta$ is strongly aperiodic, by Definition \ref{ape2} there is a constant $N>0$ such that for each $n\geq N$ and distinct $r,s\in \mathbb{Z}$,  
		$$(E\ast\{a_{rn}\})\cap (E\ast\{a_{sn}\})=\varnothing.$$
		
		Since $\chi_K\in L_+^\Phi(K)$, there is a sequence $(f_k)_{k=1}^\infty\subseteq P_{\eta,w}$ such that for each $k\in\mathbb N$,  $\|f_k-\chi_E\|_\Phi<\frac{1}{4^k}$. By Definition \ref{pe}, for each $k=1,2,\ldots$ there exists  $n_k\in\mathbb N$ such that for all $r\geq 1$, $\Lambda_{rn_k}f_k=f_k$. Clearly, we can suppose that $n_{k+1}>n_k\geq N$ for all $k\in\mathbb N$.
		Since $L^\Phi(K)\subseteq L^1(K)$, there is a constant $M>0$ such that for each $f\in L^\Phi(K)$,
		$$M \|f\|_1\leq \|f\|_\Phi.$$
		
		For each $k=1,2,\ldots$, we put $A_k:=\{x\in E:|f_k(x)-1|\geq \frac{1}{2^k}\}$. So, same as the proof of Theorem \ref{33} we have  $m(A_k)<\frac{1}{M 2^{k}}$.
		Also, setting $B_k:=\{x\in E: |v_{sn_k}(x)f_k(x\ast a_{-sn_k})|\geq\frac{1}{2^k}\}$ we have  $m(B_k)<\frac{1}{M 2^k}$. Indeed,
		\begin{align*}
		\frac{1}{4^k}&\geq \|f_k-\chi_E\|_\Phi\geq M\|f_k-\chi_E\|_1=M\|\Lambda_{sn_k}f_k-\chi_E\|_1\\
		&=M\int_K |\Lambda_{sn_k}f_k(x)-\chi_E(x)|\,dm(x)\geq M\int_{B_k}|v_{sn_k}(x)f_k(x\ast a_{-sn_k})-1|\,dm(x)\\
		&\geq \frac{M}{2^k}m(B_k).
		\end{align*}
		
		For each $k\in\mathbb N$ we set $E_k:=E\backslash(A_k\cup B_k)$. Then, by the above inequalities, $\lim_{k\rightarrow\infty}m(E_k)=m(E)$. Moreover,  for each $k=1,2,\ldots$, 
		\begin{align*}
		\int_{K\backslash E}f_k(x)\,dm(x)&\geq\sum_{r=1}^\infty\left(\int_{E\ast\{a_{rn_k}\}}f_k(x)\,dm(x)\right)+\sum_{s=1}^\infty\left(\int_{E\ast\{a_{-sn_k}\}} f_k(x)\,dm(x)\right)\\
		&=\sum_{r=1}^\infty\left(\int_K \chi_{E\ast\{a_{rn_k}\}}(x)f_k(x)\,dm(x)\right)\\
		&\hspace{4cm}+\sum_{s=1}^\infty\left(\int_K \chi_{E\ast\{a_{-sn_k}\}}(x)f_k(x)\,dm(x)\right)\\
		&\geq \sum_{r=1}^\infty\left(\int_K\chi_{E}(x\ast a_{-sn_k})f_k(x)\,dm(x)\right)\\
		&\hspace{4cm}+\sum_{s=1}^\infty\left(\int_K \chi_{E}(x\ast a_{sn_k})f_k(x)\,dm(x)\right)\\
		&=\sum_{r=1}^\infty\left(\int_K\chi_{E}(x\ast a_{-sn_k})\Lambda_{sn_k}f_k(x)\,dm(x)\right)\\
		&\hspace{4cm}+\sum_{s=1}^\infty\left(\int_K \chi_{E}(x)f_k(x\ast a_{-sn_k})\,dm(x)\right)
		\end{align*}
		Now by using the definition of operator $\Lambda_{sn_k}$ we get
		\begin{align*}
		\int_{K\backslash E}f_k(x)\,dm(x)&\geq\sum_{r=1}^\infty\left(\int_K\chi_{E}(x\ast a_{-sn_k})v_{sn_k}(x)f_k(x\ast a_{-sn_k})\,dm(x)\right)\\
		&\hspace{4cm}+\sum_{s=1}^\infty\left(\int_{E_k}f_k(x\ast a_{-sn_k})\,dm(x)\right)
		\end{align*}
		\begin{align*}
		&=\sum_{r=1}^\infty\left(\int_K((\chi_{E})^{a_{-sn_k}}v_{sn_k})^{a_{sn_k}}(x)f_k(x)\,dm(x)\right)\\
		&\hspace{4cm}+\sum_{s=1}^\infty\left(\int_{E_k}v_{sn_k}(x)^{-1}v_{sn_k}(x)f_k(x\ast a_{-sn_k})\,dm(x)\right)\\
		&\geq\sum_{r=1}^\infty\left(\int_{E_k}(\chi_{E})^{a_{-sn_k}}v_{sn_k})^{a_{sn_k}}(x)f_k(x)\,dm(x)\right)\\
		&\hspace{4cm}+\sum_{s=1}^\infty\left(\int_{E_k}v_{sn_k}(x)^{-1}v_{sn_k}(x)f_k(x\ast a_{-sn_k})\,dm(x)\right)\\
		&\geq(1-\frac{1}{2^k})\sum_{r=1}^\infty\left(\int_{E_k}((\chi_{E})^{a_{-sn_k}}v_{sn_k})^{a_{sn_k}}\,dm\right)\\
		&\hspace{4cm}+(1-\frac{1}{2^k})\sum_{s=1}^\infty\left(\int_{E_k}v_{sn_k}(x)^{-1}\,dm(x)\right)
		\end{align*}
		thanks to the right invariance of the Haar measure $m$ and Definition \ref{ape2}. Therefore,
		\begin{align*}
		\frac{1}{4^{k}}&>\|f_k-\chi_E\|_\Phi\geq M\|f_k-\chi_E\|_1 \geq M\int_{K\backslash E} |f_k(x)|\,dm(x)\\
		&\geq M(1-\frac{1}{2^k})\sum_{r=1}^\infty\left(\int_{E_k}(\chi_{E}^{a_{-sn_k}}v_{sn_k})^{a_{sn_k}}\,dm\right)\\
		&\hspace{4cm}+M(1-\frac{1}{2^k})\sum_{s=1}^\infty\left(\int_{E_k}v_{sn_k}(x)^{-1}\,dm(x)\right),
		\end{align*}
		and the proof of the theorem is completed.
	\end{proof}
	%---------------------------------------------------------
	\begin{remark}\label{ddd}
		Note that by \cite[Theorem 2]{hud}, if $K$ is a locally compact group and $\Phi$ is a Young function, then the following conditions are equivalent:
		\begin{enumerate}
			\item $L^\Phi(K)$ is a Banach algebra under convolution;
			\item $L^\Phi(K)\subseteq L^1(K)$;
			\item $\lim_{x\rightarrow 0^+}\frac{\Phi(x)}{x}>0$ or $K$ is compact.
		\end{enumerate}
		
		The condition $L^\Phi(K) \subseteq L^1(K)$, where $K$ is a general hypergroup, has been studied by first author with his co-authors in \cite{Kumar}. This condition satisfies if  $(K,m)$ is a finite measure space or the  right derivative $\Phi'(0)>0$.
	\end{remark}
	%=====================================================
	%=====================================================

	%---------------------------------------------------------------
	%\begin{example}
	%Let $G$ be a locally compact group, $I$ be the inner automorphism group of $G$, and $Z:=\{x\in G: \text{for each } y\in G, xy=yx\}$. Suppose that $G$ is a $Z$--group, i.e. $G/Z$ is compact. Then $G_I$, the space of all conjugacy classes in $G$, is a strong hypergroup. Also we have $Z(G_I)=\{[z]: z\in Z\}$ (see \cite{ross} and \cite{grosser} for more details).
	%\end{example}
	%---------------------------------------------------------------------------------------------
	In sequel of the paper, we assume that the aperiodic sequence is a subset of the center of hypergroup. We give a necessary condition for a weight to be densely hypercyclic.
	%--------------------------------------------------------
	\begin{theorem}
		Let $\Phi$ be a Young function, $K$ be a locally compact hypergroup with a right Haar measure $m$, $w$ be a weight function on $K$, and $\eta:=(a_n)_{n\in\mathbb Z}$ be an aperiodic sequence in $Z(K)$.  If $w$ is  densely $(\eta,\Phi)$-hypercyclic, then for every compact subset $E\subseteq K$ with $m(E)>0$, there exist a sequence $(E_k)_{k=1}^\infty$ of subsets of $E$, and a strictly increasing sequence $(n_k)_{k=1}^\infty\subseteq\mathbb{N}$ such that $\lim_{k\rightarrow\infty}\|\chi_{E\setminus E_k}\|_\Phi=0$ and
		$$\lim_{k\rightarrow\infty}\|v^{-1}_{n_k}|_{E_k}\|_{\sup}=\lim_{k\rightarrow\infty}\|h_{n_k}|_{E_k}\|_{\sup}=0,$$
		where for all $n\in\mathbb{N}$ and $x\in K$,
		$v_n(x):=w(x)\cdot w(xa^-_{1})\cdot \ldots \cdot w(xa^-_{n}),$ and $h_n(x):=v_n(xa_{n}).$
	\end{theorem}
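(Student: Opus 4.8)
The plan is to follow the scheme of Theorem \ref{33}, but because we no longer assume $L^\Phi(K)\subseteq L^1(K)$, the comparison with the $L^1$--norm is unavailable and must be replaced by a mechanism that stays inside $L^\Phi(K)$. The decisive new feature is that $\eta\subseteq Z(K)$: each $a_n$ acts by a genuine translation, $\delta_x\ast\delta_{a_n}=\delta_{xa_n}$, and by Lemma \ref{1.1} the Luxemburg norm is invariant under translation by $a_n$ and $a_n^-$ (so, using $N_\Phi\le\|\cdot\|_\Phi\le 2N_\Phi$, the norm $\|\cdot\|_\Phi$ is invariant up to the harmless factor $2$). I would begin from the hypothesis: since $w$ is densely $(\eta,\Phi)$-hypercyclic and $\chi_E\in L^\Phi(K)$, there are hypercyclic vectors $(f_k)$ with $\|f_k-\chi_E\|_\Phi<\delta_k$, $\delta_k:=4^{-k}$; hypercyclicity of $f_k$ and aperiodicity (Definition \ref{ape}) then furnish a strictly increasing $(n_k)$ with $E\cap(E\ast\{a_{\pm n_k}\})=\varnothing$ and $\|\Lambda_{n_k}f_k-\chi_E\|_\Phi<\delta_k$.

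Two identities drive the argument. First, $\Lambda_{n_k}f_k(x)=v_{n_k}(x)\,f_k(xa_{n_k}^-)$. Second, translating by the central element $a_{n_k}$ and using $xa_{n_k}a_{n_k}^-=x$ gives $(\Lambda_{n_k}f_k)^{a_{n_k}}=h_{n_k}\cdot f_k$. Aperiodicity ensures that for $x\in E$ one has $xa_{n_k}^-\notin E$ and $xa_{n_k}\notin E$, hence $\chi_E(xa_{n_k}^-)=0$ and $\chi_E^{a_{n_k}}(x)=0$. Consequently, on $E$,
$$|h_{n_k}f_k|=\bigl|(\Lambda_{n_k}f_k)^{a_{n_k}}-\chi_E^{a_{n_k}}\bigr|,\qquad |f_k(xa_{n_k}^-)|=\bigl|(f_k-\chi_E)(xa_{n_k}^-)\bigr|,$$
and by the translation invariance above both right-hand sides have $\Phi$-norm $O(\delta_k)$.

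To convert these $\Phi$-norm bounds into sup bounds on a subset I would use the Markov-type inequality $\|\chi_{\{|g|\ge\lambda\}}\|_\Phi\le\lambda^{-1}\|g\|_\Phi$, valid because $\chi_{\{|g|\ge\lambda\}}\le\lambda^{-1}|g|$ and $\|\cdot\|_\Phi$ is monotone. With $\lambda_k:=\sqrt{\delta_k}$, introduce the exceptional sets
\begin{align*}
A_k&:=\{x\in E:|f_k(x)-1|\ge\tfrac12\}, & C_k&:=\{x\in E:|h_{n_k}(x)f_k(x)|\ge\lambda_k\},\\
D_k&:=\{x\in E:|f_k(xa_{n_k}^-)|\ge\lambda_k\}, & F_k&:=\{x\in E:|\Lambda_{n_k}f_k(x)-1|\ge\tfrac12\}.
\end{align*}
The Markov inequality together with the two displayed identities shows $\|\chi_{A_k}\|_\Phi,\|\chi_{F_k}\|_\Phi=O(\delta_k)$ and $\|\chi_{C_k}\|_\Phi,\|\chi_{D_k}\|_\Phi=O(\lambda_k)$, all tending to $0$; setting $E_k:=E\setminus(A_k\cup C_k\cup D_k\cup F_k)$ and using subadditivity gives $\|\chi_{E\setminus E_k}\|_\Phi\to0$. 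On $E_k$ we have $|f_k|>\tfrac12$ and $|h_{n_k}f_k|<\lambda_k$, whence $h_{n_k}<2\lambda_k$ (as $h_{n_k}>0$); and $|\Lambda_{n_k}f_k|>\tfrac12$ with $|f_k(xa_{n_k}^-)|<\lambda_k$ forces $v_{n_k}^{-1}=|f_k(xa_{n_k}^-)|/|\Lambda_{n_k}f_k|<2\lambda_k$. Letting $k\to\infty$ yields $\|v_{n_k}^{-1}|_{E_k}\|_{\sup}\to0$ and $\|h_{n_k}|_{E_k}\|_{\sup}\to0$, as required.

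I expect the main obstacle to be exactly the step that the $L^1$ hypothesis resolved in Theorems \ref{33} and \ref{22}: passing from smallness in $L^\Phi(K)$ to pointwise (sup) smallness on a set of almost full $\Phi$-measure, with no embedding into $L^1$ at hand. The way around it is the pairing of the central-translation invariance of $N_\Phi$ (Lemma \ref{1.1}), which keeps every estimate within $L^\Phi(K)$, with the Markov inequality, which manufactures exceptional sets of small $\Phi$-measure. A subsidiary technical point is that solving for $h_{n_k}$ and for $v_{n_k}^{-1}$ requires dividing by $f_k$ and by $\Lambda_{n_k}f_k$; this is legitimate only after excising $A_k$ and $F_k$, where these quantities are bounded away from $0$, which is why four exceptional sets rather than two are needed.
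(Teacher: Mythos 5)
Your proposal is correct and follows essentially the same route as the paper's own proof: approximate $\chi_E$ by hypercyclic vectors, use aperiodicity to separate $E$ from its translates $E\ast\{a_{\pm n_k}\}$, carve out exceptional subsets of $E$ where $f_k$, $\Lambda_{n_k}f_k$, $h_{n_k}f_k$, or the pulled-back $f_k(\cdot\,a_{n_k}^-)$ misbehave, bound their $\Phi$-norms by a Markov-type estimate combined with central-translation invariance, and read off the sup bounds on the complement. The differences are only cosmetic: the paper writes the Markov step directly through the defining supremum of $\|\cdot\|_\Phi$ and performs the change of variables $x\mapsto xa_{n_k}$ inside that supremum rather than invoking Lemma \ref{1.1} with the factor-$2$ equivalence of norms, and its exceptional sets $A_k$, $C_k$, $D_k$, $B_k a_{n_k}$ correspond exactly to your $A_k$, $F_k$, $C_k$, $D_k$.
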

	\begin{proof}
		Suppose that $w$ is a  densely $(\eta,\Phi)$-hypercyclic weight on $K$. Let $E\subseteq K$ be compact and  $m(E)>0$.  By the hypothesis,  there exist  a sequence $(f_k)_{k=1}^\infty\subseteq L^\Phi(K)$ and a strictly increasing sequence $(n_k)_{k=1}^\infty\subseteq\mathbb{N}$ such that for each $k=1,2,\ldots$,
		$$\|f_k-\chi_E\|_\Phi<\frac{1}{4^{k}},\quad E\cap(Ea_{n_k})=\varnothing\quad\text{and}\quad \|\Lambda_{n_k}f_k-\chi_E\|_\Phi<\frac{1}{4^k}.$$
		
		Let $A_k:=\{x\in E: |f_k(x)-1|\geq\frac{1}{2^k}\}$. So, for each $x\in E\backslash A_k$, $|f_k(x)|>1-\frac{1}{2^k}$ and $\|\chi_{A_k}\|_\Phi<\frac{1}{2^{k}}$.
		Similarly, setting $B_k:=\{x\in K\backslash E:|f_k(x)|\geq\frac{1}{2^k}\}$ we have $\|\chi_{B_k}\|_\Phi<\frac{1}{2^{k}}$. Also, if we put $C_k:=\{x\in E: |v_{n_k}(x)f(xa_{-n_k})-1|\geq\frac{1}{2^k}\}$, then
		\begin{align*}
		\frac{1}{4^{k}}&>\|\Lambda_{n_k}f_k(x)-\chi_E\|_\Phi=\sup_{v\in\Omega_\Psi}\int_K |\Lambda_{n_k}f_k(x)-\chi_E(x)|\,|v(x)|\,dm(x)\\
		&\geq \sup_{v\in\Omega_\Psi}\int_{C_k} |w(x)\cdot w(xa_{-1})\cdot \ldots \cdot w(xa_{-n_k})\cdot f_k(xa_{-n_k})-1|\,|v(x)|\,dm(x)\\
		&=\sup_{v\in\Omega_\Psi}\int_{C_k} |v_{n_k}(x)f(xa_{-n_k})-1|\,|v(x)|\,dm(x)\geq \frac{1}{2^{k}}\,\|\chi_{C_k}\|_\Phi,
		\end{align*}
		and so, $\|\chi_{C_k}\|_\Phi<\frac{1}{2^{k}}$.
		Hence, for each $x\in E\backslash(C_k\cup B_k a_{n_k})$, we have
		\begin{align*}
		v_{n_k}(x)^{-1}<\frac{|f_k(xa_{-n_k})|}{1-\frac{1}{2^k}}<\frac{1}{2^k-1}.
		\end{align*}
		Put $D_k:=\{x\in E:|h_{n_k}(x)f_k(x)|\geq\frac{1}{2^k}\}$. Then,
		for each $x\in E\backslash(D_k\cup A_k)$, 
		$$h_{n_k}(x)<\frac{\frac{1}{2^k}}{|f_k(x)|}<\frac{1}{2^k-1}.$$
		Also,
		\begin{align*}
		\frac{1}{4^{k}}&>\|\Lambda_{n_k}f_k(x)-\chi_E\|_\Phi=\sup_{v\in\Omega_\Psi}\int_K |\Lambda_nf(x)-\chi_E(x)|\,|v(x)|\,dm(x)\\
		&=\sup_{v\in\Omega_\Psi}\int_K |w(x)\cdot w(xa_{-1})\cdot \ldots \cdot w(xa_{-n_k})\cdot f(xa_{-n_k})-\chi_E(x)|\,|v(x)|\, dm(x)\\
		&=\sup_{v\in\Omega_\Psi}\int_K |v_{n_k}(x)f_k(xa_{-n_k})-\chi_E(x)|\,|v(x)|\, dm(x)\\
		&=\sup_{v\in\Omega_\Psi}\int_K |v_{n_k}(xa_{n_k})f(x)-\chi_E(xa_{n_k})|\,|v(x)|\, dm(x)\\
		&=\sup_{v\in\Omega_\Psi}\int_K |v_{n_k}(xa_{n_k})f_k(x)-\chi_{Ea_{-n_k}}(x)|\,|v(x)|\, dm(x)\\
		&\geq\sup_{v\in\Omega_\Psi}\int_{D_k} |v_{n_k}(xa_{n_k})f_k(x)|\,|v(x)|\, dm(x)\geq\frac{1}{2^{k}}\|\chi_{D_k}\|_\Phi,
		\end{align*}
		and so $\|\chi_{D_k}\|_\Phi<\frac{1}{2^{k}}$. 
		Now, put $E_k:=E\backslash(A_k\cup(B_ka_{n_k})\cup C_k\cup D_k)$. Then, $\|\chi_{E\backslash E_k}\|_\Phi<\frac{4}{2^{k}}\rightarrow 0$, $\|h_{n_k}|_{E_k}\|_{\text{sup}}\rightarrow 0$ and  $\|v_{n_k}^{-1}|_{E_k}\|_{\text{sup}}\rightarrow 0$, as $k\rightarrow\infty$.
	\end{proof}
	%--------------------------------------------------------------------------
	%===================================================
	%=====================================================
	\begin{theorem}\label{the35}
		Let $\Phi$ be a Young function with $\Phi\in\Delta_2$,  $K$ be a locally compact hypergroup, $w$ and $\frac{1}{w}$ be weights on $K$, and $\eta:=(a_n)_{n\in\mathbb Z}\subseteq Z(K)$ be an aperiodic sequence of elements in $K$.  Suppose that for every compact subset $E\subseteq K$ with $m(E)>0$, there exist a sequence $(E_k)_{k=1}^\infty$ of subsets of  $E$, and  a  sequence $(n_k)_{k=1}^\infty\subseteq\mathbb{N}$ with $n_1<n_2<\ldots$
		such that $\lim_{k\rightarrow\infty}\|\chi_{E\setminus E_k}\|_\Phi=0$ and
		$$\lim_{k\rightarrow\infty}\|v^{-1}_{n_k}|_{E_k}\|_{\sup}=\lim_{k\rightarrow\infty}\|h_{n_k}|_{E_k}\|_{\sup}=0,$$
		where for all $n\in\mathbb{N}$ and $x\in K$,
		$v_n(x):=w(x)\cdot w(xa^-_{1})\cdot \ldots \cdot w(xa^-_{n}),$ and $h_n(x):=v_n(xa_{n})$. Then, $w$ is densely $(\eta,\Phi)$-hypercyclic
	\end{theorem}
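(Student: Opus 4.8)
The plan is to show that the sequence $(\Lambda_n)$ is topologically transitive and then to invoke the Birkhoff transitivity theorem (see \cite{gro}): since $\Phi\in\Delta_2$ the subspace $C_c(K)$ is dense in the separable space $L^\Phi(K)$, so topological transitivity forces the hypercyclic vectors to form a dense $G_\delta$ set, which is precisely what being densely $(\eta,\Phi)$-hypercyclic means in Definition \ref{hyper}. As $C_c(K)$ is dense, it suffices to show: for every $f,g\in C_c(K)$ and every $\varepsilon>0$ there are an arbitrarily large $n$ and a vector $u\in L^\Phi(K)$ with $\|u-f\|_\Phi<\varepsilon$ and $\|\Lambda_n u-g\|_\Phi<\varepsilon$.

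First I would record the exact right inverse of $\Lambda_n$. Writing $a_{-n}=a_n^-$ and using $\eta\subseteq Z(K)$ (so the relevant supports are singletons and $a_{-n}a_n=e$), one has $\Lambda_n f(x)=v_n(x)\,f(xa_{-n})$; I define $S_n g(x):=h_n(x)^{-1}g(xa_n)$. Since $h_n(xa_{-n})=v_n(xa_{-n}a_n)=v_n(x)$, a direct computation yields the exact identity $\Lambda_nS_ng=g$. Moreover $S_n$ is a bounded operator on $L^\Phi(K)$: it is the composition of multiplication by the bounded function $h_n^{-1}$ (bounded because $\tfrac1w$ is a weight) with a translation by a central element, both of which are bounded by Lemma \ref{3.2}.

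Now fix $f,g\in C_c(K)$, set $E:=\spt f\cup\spt g$ (if $m(E)=0$ the claim is trivial, so assume $m(E)>0$), and let $(E_k)$ and $(n_k)$ be the sequences supplied by the hypothesis for this $E$. The main obstacle is that the functions $v_{n_k}^{-1}$ and $h_{n_k}$ are controlled only on $E_k$, while on $E\setminus E_k$ they may be as large as a power of $\|\tfrac1w\|_{\sup}$, respectively $\|w\|_{\sup}$; such a blow-up multiplied by the small number $\|\chi_{E\setminus E_k}\|_\Phi$ need not tend to $0$. The device that removes this difficulty is to truncate the data to $E_k$ \emph{before} applying the operators: put $f_k:=f\chi_{E_k}$, $g_k:=g\chi_{E_k}$ and $u_k:=f_k+S_{n_k}g_k$. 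Using that translation by a central element is an $N_\Phi$-isometry (Lemma \ref{1.1}) together with $N_\Phi\le\|\cdot\|_\Phi\le 2N_\Phi$ and the monotonicity of $N_\Phi$, I obtain
\[
\|S_{n_k}g_k\|_\Phi\le 2\,\|v_{n_k}^{-1}|_{E_k}\|_{\sup}\,\|g\|_{\sup}\,N_\Phi(\chi_E),\qquad \|\Lambda_{n_k}f_k\|_\Phi\le 2\,\|h_{n_k}|_{E_k}\|_{\sup}\,\|f\|_{\sup}\,N_\Phi(\chi_E),
\]
both of which tend to $0$ by hypothesis, since after truncation every weight product lives on the controlled set $E_k$.

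Finally I would assemble the estimates. From $\|f-f_k\|_\Phi\le\|f\|_{\sup}\,\|\chi_{E\setminus E_k}\|_\Phi\to0$ we get $\|u_k-f\|_\Phi\le\|f-f_k\|_\Phi+\|S_{n_k}g_k\|_\Phi\to0$. On the other hand $\Lambda_{n_k}u_k=\Lambda_{n_k}f_k+\Lambda_{n_k}S_{n_k}g_k=\Lambda_{n_k}f_k+g_k$, and since $\|g-g_k\|_\Phi\to0$ this gives $\|\Lambda_{n_k}u_k-g\|_\Phi\le\|\Lambda_{n_k}f_k\|_\Phi+\|g_k-g\|_\Phi\to0$; here aperiodicity of $\eta$ guarantees $E\cap(E\ast\{a_{n_k}\})=\varnothing$ for large $k$, so the error term $\Lambda_{n_k}f_k$ is supported off $E$ and cannot interfere with the target $g_k$. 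Choosing $k$ large enough makes both distances smaller than $\varepsilon$ while $n=n_k$ is as large as we wish. This establishes topological transitivity of $(\Lambda_n)$, and hence, by Birkhoff's theorem, the dense $(\eta,\Phi)$-hypercyclicity of $w$.
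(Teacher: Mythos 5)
Your proof is correct and follows essentially the same route as the paper's: the same right inverse $S_n$ (your $S_ng(x)=h_n(x)^{-1}g(xa_n)$ is exactly the paper's $S_ng(x)=v_n^{-1}(xa_n)g(xa_n)$), the same truncated test vector $f\chi_{E_k}+S_{n_k}(g\chi_{E_k})$, the same two decay estimates for $\|\Lambda_{n_k}(f\chi_{E_k})\|_\Phi$ and $\|S_{n_k}(g\chi_{E_k})\|_\Phi$, and the same conclusion via topological transitivity and the Birkhoff-type universality theorem, which is precisely \cite[Theorem 1.57]{gro} cited by the paper. The only cosmetic difference is that you obtain the key bounds through Lemma \ref{1.1} and the equivalence $N_\Phi\le\|\cdot\|_\Phi\le 2N_\Phi$ (picking up a harmless factor of $2$ and $\|f\|_{\sup}N_\Phi(\chi_E)$ in place of $\|f\|_\Phi$), whereas the paper computes the Orlicz norm directly inside the supremum over $\Omega_\Psi$.
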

	%----------------------------------------------------------------------------------------------------------
	\begin{proof}
		Suppose that $U$ and $V$ are non-empty open subsets of $L^\Phi(K)$. There are $f,g\in C_c(K)$ such that $f\in U$ and $g\in V$, since $\Phi\in\Delta_2$. 
		Put $E:=\text{supp}(f)\cup\text{supp}(g)$. Let the sequences $(E_k)$, $(h_{n_k})$ and $(v_{n_k})$ satisfy the hypothesis. For each $\epsilon>0$, there is a constant $N>0$ such that for each $k\geq N$, $E\cap E\ast\{a_{n_k}\}=E\cap E\ast\{a^-_{n_k}\}=\varnothing$, $\|h_{n_k}|_{E_k}\|_{\sup}\,\|f\|_\Phi<\epsilon$ and $\|v_{n_k}^{-1}|_{E_k}\|_{\sup}\,\|f\|_\Phi<\epsilon$. Hence, for each $k\geq N$ we have
		\begin{align*}
		\|\Lambda_{k}(f\chi_{E_{k}})\|_\Phi&=\sup_{v\in\Omega_\Psi}\int_K|v_{n_k}(x)|\,|f(x a_{-n_k})\chi_{E_{k}}(xa_{-n_k})|\,|v(x)|\,dm(x)\\
		&=\sup_{v\in\Omega_\Psi}\int_{{E_{k}}a_{n_k}}|v_{n_k}(x)|\,|f(x a_{-n_k})|\,|v(x)|\,dm(x)\\
		&=\sup_{v\in\Omega_\Psi}\int_{E_{k}}v_{n_k}(xa_{n_k})\,|f(x)|\,|v(x)|\,dm(x)\\
		&=\sup_{v\in\Omega_\Psi}\int_{E_{k}}h_{n_k}(x)\,|f(x)|\,|v(x)|\,dm(x)\\
		&\leq \|h_{n_k}|_{E_k}\|_{\text{sup}}\, \|f\|_\Phi<\epsilon.
		\end{align*}
		For each $n\in\mathbb{N}$ and $x\in K$, we define $S_n f(x):=v_n^{-1}(xa_n)f(xa_{n})$. So, 
		$$S_n(\Lambda_n f)(x)=v_n(xa_n)^{-1}(\Lambda_nf)(xa_n)=v_n(xa_n)^{-1}v_n(xa_n)f(xa_na_{-n})=f(x),$$ 
		and similarly, $\Lambda_n(S_nf)=f$. For each $k\geq N$ we have
		\begin{align*}
		\|S_{n_k}(f\chi_{E_{k}})\|_\Phi&=\sup_{v\in\Omega_\Psi}\int_K v_{n_k}^{-1}(xa_{n_k})\,|f(xa_{n_k})|\,\chi_{E_{k}}(xa_{n_k})\,|v(x)|\,dm(x)\\
		&=\sup_{v\in\Omega_\Psi}\int_{E_{k}}v_{n_k}^{-1}(x)\,|f(x)|\,|v(x)|\, dm(x)\\
		&\leq \|v_{n_k}^{-1}|_{E_k}\|_{\sup}\,\|f\|_\Phi<\epsilon.
		\end{align*}
		If for each $k\in\mathbb{N}$,
		$$v_k:=f\chi_{{E_k}}+S_{n_k}(g\chi_{E_{k}}),$$
		then $v_k\in L^\Phi(K)$, and
		\begin{align*}
		\|v_k-f\|_\Phi&=\|f\chi_{E\backslash E_k}-S_{n_k}(g\chi_{E_{k}})\|_\Phi\\
		&\leq \|f\chi_{E\backslash E_k}\|_\Phi+\|S_{n_k}(g\chi_{E_{k}})\|_\Phi\\
		&\leq \|f\|_{\text{sup}}\,\|\chi_{E\backslash E_k}\|_\Phi+\|S_{n_k}(g\chi_{E_{k}})\|_\Phi\rightarrow 0,
		\end{align*}
		as $k\rightarrow\infty$. Also,
		\begin{align*}
		\|\Lambda_{n_k}v_k-g\|_\Phi&=\|\Lambda_{n_k}(f\chi_{ E_k})+g\chi_{E_{k}}-g\|_\Phi\\
		&\leq \|\Lambda_{n_k}(f\chi_{ E_k})\|_\Phi+\|g\|_{\text{sup}}\,\|\chi_{E\backslash E_{k}}\|_\Phi\rightarrow 0,
		\end{align*}
		as $k\rightarrow \infty$. Then, $\Lambda_{n_k}(U)\cap V\neq\varnothing$, which implies that $w$ is densely $(\eta,\Phi)$-hypercyclic by
		\cite[Theorem 1.57]{gro}.
	\end{proof}
	%===================================================
	%=====================================================
	%=====================================================
	The following form of the hypercyclic criterion is given in \cite{Bes} which is derived from original criterion obtained by Kitai \cite{Kitai}.
	\begin{lemma} \label{Kitai} Let $\mathcal X$ be a Fr$\acute{\text{e}}$chet space and  $T: \mathcal{X} \rightarrow \mathcal{X}$ be a bounded linear operator. Then, $T$ is  hypercyclic if it satisfies the following criteria: 
		\begin{itemize}
			\item[(i)] $(T^n)$ admits a subsequences $(T^{n_k})$ converging to zero pointwise on a dense subset of $\mathcal{X},$
			\item[(ii)] there exists a dense subset $Y$ of $\mathcal{X}$ and a sequence of maps $S^{n_k}:Y \rightarrow \mathcal{X}$ such that $(S^{n_k})$ tends to zero pointwise on $Y$ and $(T^{n_k}S^{n_k})$ tends to the identity pointwise on $Y.$ 
		\end{itemize}   
	\end{lemma}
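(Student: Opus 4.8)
The plan is to deduce the hypercyclicity of $T$ from its topological transitivity by invoking the Birkhoff transitivity theorem, which states that on a separable Fréchet space (indeed, on any separable completely metrizable topological vector space with no isolated points) a continuous map is topologically transitive if and only if its set of vectors with dense orbit is a dense $G_\delta$; in particular, topological transitivity forces hypercyclicity. Consequently, it suffices to show that conditions (i) and (ii) imply that $T$ is topologically transitive, i.e. that for any two non-empty open sets $U,V\subseteq\mathcal X$ there is an index with $T^{n_k}(U)\cap V\neq\varnothing$.

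To verify transitivity I would fix non-empty open sets $U,V\subseteq\mathcal X$, write $X_0$ for the dense subset on which $(T^{n_k})$ converges to zero as in (i), and let $Y$ be the dense subset furnished by (ii). Since both $X_0$ and $Y$ are dense, I can choose $x\in U\cap X_0$ and $y\in V\cap Y$. The key construction is to perturb $x$ by pulling $y$ back through the maps $S^{n_k}$, namely to set
$$z_k:=x+S^{n_k}y\qquad(k\in\mathbb N),$$
which lies in $\mathcal X$ for each $k$.

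Then I would run two elementary limits. First, $S^{n_k}y\to 0$ by (ii), and since addition is continuous in a topological vector space, $z_k\to x\in U$, so $z_k\in U$ for all large $k$. Second, using the linearity of the iterates,
$$T^{n_k}z_k=T^{n_k}x+T^{n_k}S^{n_k}y,$$
where $T^{n_k}x\to 0$ by (i) and $T^{n_k}S^{n_k}y\to y$ by (ii); hence $T^{n_k}z_k\to y\in V$, so $T^{n_k}z_k\in V$ for all large $k$. Picking any sufficiently large $k$ yields $z_k\in U$ with $T^{n_k}z_k\in V$, i.e. $T^{n_k}(U)\cap V\neq\varnothing$, and transitivity follows.

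These convergence arguments are routine, requiring only continuity of addition and linearity of the $T^n$; note in particular that no regularity of the maps $S^{n_k}$ is needed, since each is evaluated solely at the single fixed point $y$. The main point to watch is the ambient hypothesis underlying the Birkhoff theorem: one needs $\mathcal X$ to be separable (the absence of isolated points is automatic for any nontrivial metrizable topological vector space), so the lemma should be read with the standing assumption that the Fréchet space $\mathcal X$ is separable — precisely the situation for the spaces $L^\Phi(K)$ treated here when $\Phi\in\Delta_2$ and $K$ is second countable, so that $C_c(K)$ is dense.
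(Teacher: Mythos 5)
Your proof is correct, but there is nothing in the paper to compare it against: the authors do not prove this lemma at all, they import it wholesale from B\`es--Peris \cite{Bes} (tracing back to Kitai's thesis \cite{Kitai}). What you have reconstructed is in fact the standard proof from that literature: the perturbation $z_k=x+S^{n_k}y$ with $x\in U\cap X_0$, $y\in V\cap Y$, the three limits $S^{n_k}y\to 0$, $T^{n_k}x\to 0$, $T^{n_k}S^{n_k}y\to y$ giving $z_k\in U$ and $T^{n_k}z_k\in V$ for all large $k$ simultaneously, and then Birkhoff's transitivity theorem (a Baire category argument over a countable base of open sets) upgrading topological transitivity to a dense $G_\delta$ set of hypercyclic vectors. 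Two of your side remarks deserve emphasis. First, the separability caveat is a genuine repair of the statement, not pedantry: conditions (i)--(ii) can hold on a non-separable Fr\'echet space --- for instance $T=2B$ (Rolewicz's operator, $B$ the backward shift) acting coordinatewise on $\ell^2(\mathbb{N}\times I)$ with $I$ uncountable, taking $X_0=Y$ to be the finitely supported vectors and $S=\tfrac{1}{2}F$ --- yet hypercyclicity is impossible there, since a dense orbit is countable and forces separability; so the lemma as printed, for an arbitrary Fr\'echet space, is literally false, and your standing assumption (satisfied by $L^\Phi(K)$ when $\Phi\in\Delta_2$ and $K$ is second countable, and assumed in \cite{Bes}) is exactly the right fix. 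Second, your argument actually proves more than hypercyclicity: since hypotheses (i)--(ii) pass to every subsequence of $(n_k)$, the same computation gives transitivity, hence a dense $G_\delta$ of vectors with dense orbit, along each subsequence --- that is, hereditary hypercyclicity with respect to $(n_k)$, which is the stronger property the paper ultimately invokes via \cite[Theorem 2.3]{Bes} in its final theorem.
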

	
	The Kitai's hypercyclic criteria as above is not necessary for the hypercyclicity of $T$ \cite{Bayart}. But it is equivalent to $T$ being  {\it hereditary hypercyclicity}, that is,  there is an increasing sequence $(n_k) \subset \mathbb{N}$ such that every subsequence $(T^{m_k})$ of $(T^{n_k})$ admits an element $x \in \mathcal X$ such that the set $\{T^{m_k}x\}_{k=1}^\infty$ is dense in $\mathcal X$.
	%========================================================
	Our next theorem gives a necessary and sufficient condition for a weighted translation operator to be hereditary hypercyclic.
	It is clear from Lemma \ref{1.1} that if $w \equiv 1$ then $\|T_{1,z}\|=1$ and hence $T_{1,z}$ can not be hypercyclic. 
	In general, for $x \in K,$ $(fg)^x \neq f^xg^x$. But it can be observed that if $x \in Z(K)$ then we have $(fg)^x=f^xg^x$ \cite[Lemma 2.6]{che1}. 
	We use Kitai's criterion to give an equivalent condition for hereditary hypercyclicity of $T_{z,w}$ while $z\in Z(K)$ is an aperiodic element.

	\begin{theorem} Let $z \in Z(K)$ be an aperiodic element, $w$ be a weight on $K$, and $\Phi$ be a strictly increasing Young function with $\Phi\in\Delta_2$. Then, the followings are equivalent: 
		\begin{itemize}
			\item[(i)] $T_{z,w}$ is a hereditary hypercyclic operator on $L^\Phi(K)$.
			\item[(ii)] For each compact set $E \subset K$ with $m(E)>0$ there exists a sequence  $(E_k)$ of Borel subsets of $E$ such that $m(E)= \lim_{k \rightarrow \infty} m(E_k)$ and both sequences  
			$$w_{n}:= \prod_{j=1}^n w*\delta_{z^-}^j \,\,\,\,\, \text{and}\,\,\,\,\,\, \tilde{w}_n:= \left( \prod_{j=0}^{n-1} w*\delta_{z}^j \right)^{-1}$$ have subsequences $\{w_{n_k}\}$ and $\{\tilde{w}_{n_k}\}$ respectively such that $$\lim_{k \rightarrow \infty} \|w_{n_k}|_{E_k}\|_\infty= \lim_{k \rightarrow \infty}\|\tilde{w}_{n_k}|_{E_k}\|_\infty=0.$$
		\end{itemize}
	\end{theorem}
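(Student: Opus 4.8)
The plan is to first unwind everything about the single operator $T:=T_{z,w}$ into the localized language already used in Theorems \ref{the35} and its predecessor, and then run the two implications in parallel with those proofs. Since $z\in Z(K)$, iterating $T_{z,w}f(x)=w(x)f(xz^{-})$ gives, for $f\in L^\Phi(K)$,
$$T^n f(x)=\Big(\prod_{j=0}^{n-1}w(xz^{-j})\Big)f(xz^{-n})=\tilde w_n(x)^{-1}\,f(xz^{-n}),$$
where $\tilde w_n(x)=\big(\prod_{j=0}^{n-1}w(xz^{-j})\big)^{-1}$ and $w_n(x)=\prod_{j=1}^n w(xz^{j})$ are the unwound forms of the products in (ii) (using $\delta_z^n=\delta_{z^n}$). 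Defining the natural right inverse $S_n g(x):=w_n(x)^{-1}g(xz^{n})$, a direct computation gives $T^nS_n=\mathrm{id}$. The key bookkeeping step is to combine Lemma \ref{1.1} (invariance of $N_\Phi$ under translation by a central element) with $N_\Phi\le\|\cdot\|_\Phi\le 2N_\Phi$: translating by $z^{n}$ and $z^{-n}$ one checks the pointwise identities $(T^nf)^{z^{n}}=w_n\,f$ and $(S_ng)^{z^{-n}}=\tilde w_n\,g$, so that for $f,g$ supported in a Borel set $F$,
$$\|T^nf\|_\Phi\le 2\,\|w_n|_F\|_\infty\,N_\Phi(f),\qquad \|S_ng\|_\Phi\le 2\,\|\tilde w_n|_F\|_\infty\,N_\Phi(g).$$
Finally I record the conceptual bridge: by the remark preceding the statement, hereditary hypercyclicity of $T$ is equivalent to the existence of a single strictly increasing $(n_k)$ with $T^{n_k}(U)\cap V\ne\varnothing$ for all large $k$ and all nonempty open $U,V$ (topological mixing along $(n_k)$, see \cite{gro}); it is this form I will verify in (ii)$\Rightarrow$(i) and exploit in (i)$\Rightarrow$(ii).

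For the sufficiency (ii)$\Rightarrow$(i), I would first manufacture a single sequence. Fix a compact exhaustion $F_1\subseteq F_2\subseteq\cdots$ of $K$; since $\Phi\in\Delta_2$, the space $C_c(K)$ is dense in $L^\Phi(K)$. Applying (ii) to each $F_m$ gives infinitely many good indices for each $m$, and a diagonal choice produces one strictly increasing $(n_k)$ together with Borel sets $E_k\subseteq F_k$ with $m(F_k\setminus E_k)\to0$, $\|w_{n_k}|_{E_k}\|_\infty\to0$ and $\|\tilde w_{n_k}|_{E_k}\|_\infty\to0$. Now take $f,g\in C_c(K)$, put $E:=\spt(f)\cup\spt(g)$, and for $k$ large (so $E\subseteq F_k$) set $v_k:=f\chi_{E_k}+S_{n_k}(g\chi_{E_k})$. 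The two displayed bounds (with $F=E_k$), together with $\|\chi_{E\setminus E_k}\|_\Phi\to0$ — which holds because $m(E\setminus E_k)\to0$ and, as $\Phi$ is strictly increasing, $N_\Phi(\chi_A)=1/\Phi^{-1}(1/m(A))\to0$ — give $\|v_k-f\|_\Phi\to0$, while $T^{n_k}S_{n_k}=\mathrm{id}$ yields $T^{n_k}v_k=T^{n_k}(f\chi_{E_k})+g\chi_{E_k}$, whence $\|T^{n_k}v_k-g\|_\Phi\to0$. As $f,g$ range over the dense set $C_c(K)$, this is exactly mixing along $(n_k)$, so $T$ is hereditary hypercyclic. (Note that no global control on $1/w$ is needed, since $S_{n_k}(g\chi_{E_k})$ is localized to $E_k$, where $\tilde w_{n_k}$ is bounded.)

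For the necessity (i)$\Rightarrow$(ii), fix compact $E$ with $m(E)>0$. Hereditary hypercyclicity gives the mixing sequence $(n_k)$; since $z$ is aperiodic and $E\ast\{z\}^{n}=Ez^{n}$, after discarding finitely many indices $E\cap Ez^{n_k}=\varnothing$, and applying mixing to shrinking balls about $\chi_E$ (with a diagonalization) I may select $f_k$ with $\|f_k-\chi_E\|_\Phi<4^{-k}$ and $\|T^{n_k}f_k-\chi_E\|_\Phi<4^{-k}$. I then peel off four exceptional subsets of $E$, each of measure tending to $0$ via the dictionary $N_\Phi(\chi_A)=1/\Phi^{-1}(1/m(A))$: the set $A_k=\{x\in E:|f_k(x)-1|\ge2^{-k}\}$ from $f_k\approx\chi_E$; the set $B_k=\{x\in E:w_{n_k}(x)|f_k(x)|\ge2^{-k}\}$, controlled because $T^{n_k}(f_k\chi_E)$ is supported on $Ez^{n_k}$ (disjoint from $E$) where it coincides with $T^{n_k}f_k$, so $N_\Phi(w_{n_k}f_k\chi_E)=N_\Phi(T^{n_k}(f_k\chi_E))\le\|T^{n_k}f_k-\chi_E\|_\Phi$; the set $C_k=\{x\in E:|\tilde w_{n_k}(x)^{-1}f_k(xz^{-n_k})-1|\ge2^{-k}\}$ from $(T^{n_k}f_k)\chi_E\approx\chi_E$; and $\tilde D_k:=G_kz^{n_k}$, where $G_k=\{y\in Ez^{-n_k}:|f_k(y)|\ge2^{-k}\}$ has small mass since $f_k\approx0$ on the disjoint set $Ez^{-n_k}$. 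Setting $E_k:=E\setminus(A_k\cup B_k\cup C_k\cup\tilde D_k)$, one reads off $\|w_{n_k}|_{E_k}\|_\infty\le 2\cdot2^{-k}$ on $E\setminus(A_k\cup B_k)$ and $\tilde w_{n_k}(x)<2\,|f_k(xz^{-n_k})|<2\cdot2^{-k}$ on $E\setminus(C_k\cup\tilde D_k)$, while $m(E_k)\to m(E)$, which is (ii).

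The genuinely delicate half is the $\tilde w_{n_k}$ estimate in the necessity direction. The hypercyclic vector $f_k$ is forced to be $\approx\chi_E$ in norm, hence carries almost no mass on the disjoint translate $Ez^{-n_k}$; yet $T^{n_k}f_k\approx\chi_E$ asserts precisely that the tiny values of $f_k$ on $Ez^{-n_k}$, once amplified by $\tilde w_{n_k}^{-1}$, rebuild $\chi_E$ on $E$. Reconciling these two facts is exactly what converts ``$f_k$ small on $Ez^{-n_k}$'' into ``$\tilde w_{n_k}$ small on $E$'', and it is where the aperiodicity of $z$ (forcing the disjointness $E\cap Ez^{n_k}=\varnothing$, and with it the exact support splittings used above) and the centrality of $z$ (making every translation an $N_\Phi$-isometry, via Lemma \ref{1.1}, and measure preserving) are both indispensable. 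The rest is routine bookkeeping with the factor-$2$ comparison of $\|\cdot\|_\Phi$ and $N_\Phi$ and with the diagonal extraction of the single sequence $(n_k)$.
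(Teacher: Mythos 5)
Your proposal is correct, and while your necessity half follows the paper, your sufficiency half takes a genuinely different route. For (i)$\Rightarrow$(ii) you and the paper run essentially the same argument: aperiodicity of $z$ gives the disjointness $E\cap(E\ast\{z\}^{n})=\varnothing$, one approximates $\chi_E$ simultaneously by $f_k$ and $T_{z,w}^{n_k}f_k$, peels off four exceptional subsets, and converts norm smallness into measure smallness through the dictionary $N_\Phi(\chi_A)=1/\Phi^{-1}(1/m(A))$; the differences are only bookkeeping (your $2^{-k}$ scheme versus the paper's $\delta$-scheme), plus the fact that your $\tilde D_k=G_kz^{n_k}$ correctly pulls the bad set inside $Ez^{-n_k}$ back into $E$, where the paper's definition of $E_{n_0,\delta}$ removes $B_\delta\subseteq K\setminus E$ (vacuous; the set that actually must be excised is $B_\delta z^{n_0}$, which the paper does use in its pointwise estimate). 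For (ii)$\Rightarrow$(i) the paper verifies Kitai's criterion (Lemma \ref{Kitai}): it shows $N_\Phi(T_{z,w}^{n_k}f)\to 0$ and $N_\Phi(S_{z,w}^{n_k}f)\to 0$ for $f\in C_c(K)$ via an Egoroff argument and concludes by B\`es--Peris \cite{Bes}; you instead prove topological mixing along a single diagonal sequence using the four-piece vectors $v_k=f\chi_{E_k}+S_{n_k}(g\chi_{E_k})$ --- the same device the paper uses to prove Theorem \ref{the35} --- and then pass to hereditary hypercyclicity via hereditary transitivity and the universality theorem \cite[Theorem 1.57]{gro}. Your route buys two genuine improvements over the paper's own proof: first, Kitai's criterion requires one sequence $(n_k)$ that works on the whole dense set $C_c(K)$, whereas the paper's $(n_k)$ comes from applying (ii) to $E=\spt(f)$ and therefore depends on $f$; your compact exhaustion plus diagonal extraction supplies the single sequence. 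Second, the paper's Egoroff step presumes $(w_{n_k})$ is uniformly bounded by some $M$ on $\spt(f)$, which condition (ii) does not furnish (it controls $w_{n_k}$ only on $E_k$); your truncation to $E_k$ never needs information off $E_k$. The price is your extra reliance on the equivalence of hereditary hypercyclicity with mixing along some sequence; this is standard (the hypercyclicity criterion along $(n_k)$ yields mixing along $(n_k)$, and mixing along $(n_k)$ yields hereditary transitivity, hence hereditary hypercyclicity by a Baire argument), but note that this step --- exactly like the paper's appeal to \cite{Bes} and \cite[Theorem 1.57]{gro} --- silently uses separability of $L^\Phi(K)$, which neither you nor the paper makes explicit.
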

	\begin{proof} {\bf (i) $\Rightarrow$ (ii)}.
		Let the operator $T_{z,w}$ be hypercyclic on $L^\Phi(K)$, and $E \subset K$ be a compact set. Since $z$ is an aperiodic element it follows from Lemma \ref{t3.4}  that there exists a natural number $N \in \mathbb{N}$ such that $E \cap(E*(z^-)^n) = \varnothing.$ For a given $\epsilon>0$, we get $f \in L^\Phi(K)$ and $n_0 \in \mathbb{N}, n_0 \geq N$ such that $$N_\Phi(f-\chi_E) < \delta^2\,\,\,\,\,\,\text{and}\,\,\,\,\, N_\Phi(T_{z,w}^{n_0}f-\chi_E) < \delta^2,$$ where $\delta$ is choosen such that $0< \delta< \frac{\epsilon}{1+\epsilon}.$
		
		Put $A_{\delta}:= \{x \in E : |f(x)-1| \geq \delta\}.$ Then \begin{eqnarray*}
			\delta^2 > N_{\Phi}(f- \chi_E) \geq N_{\Phi}(\chi_E (f-1))& \geq & N_\Phi(\chi_{A_{\delta}}(f-1))\\ &\geq& N_\Phi(\chi_{A_{\delta}} \delta)\\ &=& \frac{\delta}{\Phi^{-1} \left( \frac{1}{m(A_{\delta})} \right)},
		\end{eqnarray*}
		which gives $m(A_{\delta})= \frac{1}{\Phi \left( \frac{1}{\delta} \right)}.$ 
		
		By a similar calculation, for $B_{\delta}:= \{x \in K \backslash E : |f(x)| \geq \delta \},$ we get $m(B_{\delta})< \frac{1}{\Phi \left( \frac{1}{\delta} \right)}.$
		
		Now, If we put $C_{n_0, \delta}:= \{x \in E: |\tilde{w}_{n_0}(x)^{-1} f(x*(z^-)^{n_0})-1| \geq \delta\}$, then we have 
		\begin{eqnarray*}
			\delta^2 &>& N_\Phi (T_{z,w}^{n_0}f-\chi_E) \geq  N_\Phi(\chi_{C_{n_0, \delta}}(T_{z,w}^{n_0}f- \chi_E)) \\&=& \inf \left\lbrace  k>0 : \int_{C_{n_0, \delta}} \Phi \left( \frac{1}{k} |\tilde{w}_{n_0}(x)^{-1} f(x*(z^-)^{n_0})-\chi_E(x)| \right) \, dm(x) \leq 1 \right\rbrace \\ &  \geq & N_\Phi (\delta \chi_{C_{n_0, \delta}})= \frac{\delta}{\Phi^{-1} \left( \frac{1}{m (C_{n_0, \delta})} \right)},
		\end{eqnarray*}
		which yields that $m(C_{n_0, \delta}) < \frac{1}{\Phi \left(\frac{1}{\delta} \right)}$.  Using $E \cap(E*(z^-)^{n_0}) = \varnothing$ we have 
		$$\tilde{w}_{n_0}(x)< \frac{|f(x*(z^-)^{n_0})|}{1-\delta}<\frac{\delta}{1-\delta}<\epsilon,\,\,\,\,\,\, x \in E \backslash (C_{n_0,\delta} \cup B_{\delta}z^{n_0}).$$
		
		Next, set $D_{n_0,\delta}:= \{x \in E: |w_{n_0}(x) f(x)| \geq \delta\}.$ Then, by Lemma \ref{1.1} and right invariance of right Haar measure $m,$ we get 
		\begin{eqnarray*}
			\delta^2 & > & N_\Phi(T_{z,w}^{n_0}f-\chi_E) \\ &=& \inf \left\lbrace k>0: \int_K \Phi\left( \frac{1}{k} |\tilde{w}_{n_0}(x)^{-1}f(x*(z^-)^{n_0})- \chi_E(x)| \right) \, dm(t) \leq 1  \right\rbrace  \\ &=& \inf \left\lbrace k>0: \int_K \Phi \left( \frac{1}{k} |w_{n_0}(x)f(x)- \chi_E(x*z^{n_0})| \right)\, dm(x) \leq 1 \right\rbrace \\ & \geq & \inf \left\lbrace k>0: \int_{D_{n_0, \delta}} \Phi \left(\frac{1}{k}| w_{n_0}(x)f(x)-\chi_E(x*z^{n_0})| \right)\, dm(x) \right\rbrace \\ &=& \inf \left\lbrace k>0: \int_{D_{n_0, \delta}} \Phi \left( \frac{1}{k} |w_{n_0}(x)f(x)| \, dm(t)| \right)\, dm(x) \leq 1\right\rbrace \\ &=& N_\Phi(\chi_{D_{n_0, \delta}} wf) \geq \delta N_\Phi(\chi_{D_{n_0, \delta}})= \frac{\delta}{\Phi^{-1} \left( \frac{1}{m(D_{n_0,\delta})} \right)},
		\end{eqnarray*}
		which implies that $m(D_{n_0, \delta}) < \frac{1}{\Phi \left( \frac{1}{\delta}\right)}.$ Now, we have $$w_{n_0}(x)< \frac{\delta}{|f(x)|}< \frac{\delta}{1-\delta}<\epsilon, \,\,\,\,\,\,x \in E \backslash (D_{n_0, \delta} \cup A_{\delta}).$$
		Finally, let $E_{n_0, \delta}:= E \backslash (A_{\delta} \cup B_{\delta} \cup C_{n_0, \delta} \cup D_{n_0, \delta}).$ Then,  it is clear that $m(E \backslash E_{n_0, \delta})< \frac{4}{\Phi(\frac{1}{\delta})}, \,\|w_{n_0}|_{E_{n_0, \delta}}\|_\infty< \epsilon$ and $\|\tilde{w}_{n_0}|_{E_{n_0, \delta}}\|_\infty < \epsilon.$
		This completes the proof.
		
		{\bf (ii) $\Rightarrow$ (i)}. We will prove that $T_{z,w}$ satisfies   Kitai's hypercyclicity criterion in Lemma \ref{Kitai}. Since $\Phi_2$ is $\Delta_2$-regular, $C_c(K)$ is dense in $L^\Phi(K)$. For each $m\in \mathbb N$ we define  the map $S_{z,w}^{m}:C_c(K) \rightarrow L^\Phi(K)$  by $$S_{z,w}^{m}(f)= \tilde{w}_{m} f*\delta_{z^-},\quad (f \in C_c(K)).$$
		
		Then, $T_{z,w}^{m}(S_{z,w}^{m}f)=f$ for all $f\in C_c(K)$.
		Fix a function $f\in C_c(K)$ and $\epsilon>0$.
		Let $(E_k)$, $(w_{n_k})$ and $(\tilde{w}_{n_k})$ be as in statement of the theorem satisfying condition (ii) related to the compact set $E:=\spt(f)$.   Now, we will show that $N_\Phi(T_{z,w}^{n_k}f) \rightarrow 0$ as $k \rightarrow \infty.$ Let $\{w_{n_k}\}$ be bounded by $M$ on the compact $\spt(f).$ For a given $\epsilon>0,$ by Egoroff 's Theorem there is  a Borel subset $E$ of $\spt(f)$ such that $m(\spt(f) \backslash E)< \frac{\epsilon}{M N_\Phi(f)}.$ Since $w_{n_k} \rightarrow 0$ uniformly on $E,$ there exists $N \in \mathbb{N}$ such that $w_{n_k}< \frac{\epsilon}{N_\Phi(f)}$ on $E.$ 
		Now, for $n_k>N$ we get
		\begin{align*}
		&N_\Phi(T_{z,w}^{n_k}f) = N_\Phi(T_{z,w}^{n_k}f \chi_{\spt(f)})\\
		&= \inf \left\lbrace k>0: \int_{\spt(f)z^{n_k}} \Phi ( \frac{1}{k} |w(x) w(x*z^{-1}) \ldots w(x*(z^-)^{n_k-1}) f(x*(z^-)^{n_k})| )\, dm(x) \leq 1\right\rbrace \\ &= \inf \left\lbrace k>0: \int_{\spt(f)} \Phi (\frac{1}{k} |w(x*z^{n_k})w(x*z^{n_k-1}) \ldots w(x*z)f(x)|) \, dm(x) \leq 1\right\rbrace \\ &= N_\Phi(w_{n_k}f \chi_E)+N_\Phi(w_{n_k} f \chi_{\spt(f) \backslash E}) \\&= \frac{\epsilon}{N_\Phi(f)} N_\Phi(f)+ \frac{2 \epsilon}{M N_\Phi(f)} M N_\Phi(f) =3 \epsilon.
		\end{align*}
		
		Using a similar argument one can see that $N_\Phi(S_{z,w}^{n_k}f) \rightarrow 0$ as $k \rightarrow \infty.$ So $T_{z,w}^{n_k}$ is hereditary hypercyclic on $L^\Phi(K)$ by \cite[Theorem 2.3]{Bes} as $C_c(K)$ is dense in $L^\Phi(K).$ 
		
	\end{proof}
	%=================================================================================
	%=================================================================================
	\section*{Acknowledgment}
	Vishvesh Kumar is supported by FWO Odysseus 1 grant G.0H94.18N: Analysis and Partial Differential Equations of Prof. Michael Ruzhansky. 
	%=====================================================
	\bibliographystyle{amsplain}
	
\end{document}